\theoremstyle{plain}
\newtheorem{thm}{Theorem}[section]
\newtheorem{prop}[thm]{Proposition}
\newtheorem{lem}[thm]{Lemma}
\newtheorem{cor}[thm]{Corollary}
\newtheorem{conj}[thm]{Conjecture}
\theoremstyle{definition}
\newtheorem{dfn}[thm]{Definition}
\newtheorem{exa}[thm]{Example}
\newtheorem{rem}[thm]{Remark}
\newtheorem{question}[thm]{Question}
\theoremstyle{plain}
\newtheorem{thmA}{Theorem}
\newtheorem{corA}[thmA]{Corollary}
\newcommand{\N}{\mathbb{N}}
\newcommand{\Z}{\mathbb{Z}}
\newcommand{\Q}{\mathbb{Q}}
\newcommand{\R}{\mathbb{R}}
\newcommand{\K}{\mathbb{K}}
\newcommand{\OO}{\mathcal{O}}
\DeclareMathOperator{\Exc}{Exc}
\DeclareMathOperator{\mult}{mult}
\DeclareMathOperator{\Supp}{Supp}
\DeclareMathOperator{\Pic}{Pic}
\DeclareMathOperator{\Div}{Div}
\DeclareMathOperator{\Bs}{Bs} 
\DeclareMathOperator{\sbs}{\mathbf{B}} 
\DeclareMathOperator{\dbs}{\mathbf{B}_{--}} 
\DeclareMathOperator{\abs}{\mathbf{B}_{+}} 
\DeclareMathOperator{\nnef}{NNef} 
\begin{document}
	
	\title
	{Comparison and Uniruledness of Asymptotic Base Loci}
	
	\author[Nikolaos Tsakanikas]{Nikolaos Tsakanikas}
	\address{Institut de Math\'ematiques (CAG), \'Ecole Polytechnique F\'ed\'erale de Lausanne (EPFL), 1015 Lausanne, Switzerland}
	\email{nikolaos.tsakanikas@epfl.ch}
	
	\author[Zhixin Xie]{Zhixin Xie}
	\address{Institut \'Elie Cartan de Lorraine, Universit\'e de Lorraine, 54506 Nancy, France}
	\email{zhixin.xie@univ-lorraine.fr}
	
	\thanks{N.T.\ gratefully acknowledges support by the ERC starting grant \#804334. This project was initiated during N.T.'s research stay from October 2022 to March 2023 at the Max Planck Institute for Mathematics (MPIM) in Bonn. N.T.\ is grateful to the MPIM for its hospitality and financial support. \newline
		\indent 2020 \emph{Mathematics Subject Classification}: 14C20, 14E30. \newline
		\indent \emph{Keywords}: Minimal Model Program, generalized pairs, asymptotic base loci, rational curves.
	}
	
	\begin{abstract}
		We prove that the asymptotic base loci of an NQC klt generalized pair with big canonical class are uniruled. We also show that the non-nef locus and the diminished base locus of the adjoint divisor of an NQC log canonical generalized pair coincide. As applications, we study the uniruledness of the asymptotic base loci associated with pseudo-effective divisors on generalized log Calabi-Yau type varieties.
	\end{abstract}
	
	\maketitle
	
	\begingroup
	\hypersetup{linkcolor=black}
	\setcounter{tocdepth}{1}
	\tableofcontents
	\endgroup

	\section{Introduction}
	\label{section:intro}
	
	Throughout the paper we work over an algebraically closed field of characteristic $0$. Unless otherwise stated, we assume that varieties are normal and projective.
	
	\medskip
	
	The base locus of a linear system is a fundamental object of study in algebraic geometry, since it carries important geometric information about the linear system itself. To analyze the asymptotic behavior of the linear system, several refined asymptotic notions of base loci have been introduced in the literature. Specifically, given a variety $X$ and an $\R$-Cartier $\R$-divisor $D$ on $X$, one defines the \emph{stable base locus} of $D$ as
	\[ \sbs(D) \coloneqq \bigcap \big\{ \Supp E \mid E \text{ is an effective $\R$-divisor on $X$ such that } E \sim_\R D \big\} , \]
	the \emph{augmented base locus} of $D$ as
	\[ \abs(D) \coloneqq \bigcap_A \sbs(D-A) , \]
	and the \emph{diminished base locus} of $D$ as
	\[ \dbs(D) \coloneqq \bigcup_A \sbs(D+A) , \]
	where the intersection, respectively the union, is taken over all ample $\R$-divisors $A$ on $X$. Note that $\abs(D) = \emptyset$ if and only if $D$ is ample, and that $\dbs(D) = \emptyset$ if and only if $D$ is nef. Therefore, these two loci measure the failure of positivity properties of the $\R$-Cartier $\R$-divisor $D$.
	Furthermore, we have the inclusions
	\[ \dbs(D) \subseteq \sbs(D) \subseteq \abs(D) , \]
	which are strict in general: we actually construct in Example \ref{exa:dbs_vs_sbs_vs_abs} a $\Z$-divisor $D$ on a smooth projective surface $S$ such that
	\[ \emptyset \subsetneq \dbs(D) \subsetneq \sbs(D) \subsetneq \abs(D) \subsetneq S . \]
	
	To detect the failure of $D$ to be nef, one also introduces the so-called \emph{non-nef locus} $\nnef(D)$ of $D$, which is defined in terms of asymptotic vanishing orders attached to $D$ (see Definitions \ref{dfn:avo} and \ref{dfn:nnef}). This locus is empty precisely when $D$ is nef, and we have the inclusion $\nnef(D) \subseteq \dbs(D)$. It is conjectured in \cite{BBP13} that these two loci always coincide. Nakayama \cite{Nak04} confirmed this conjecture when $X$ is a smooth projective variety, while Boucksom, Broustet and Pacienza \cite{BBP13} verified their conjecture in the klt setting for the adjoint divisor.
	Moreover, Cacciola and Di Biagio \cite{CDB13} established this conjecture in dimension two, as well as in higher dimensions under the assumption that $X$ admits the structure of a klt pair with rational coefficients.
	In Section \ref{section:comparison_loci} we extend the previous results to the category of generalized pairs; in particular, we show that Boucksom, Broustet and Pacienza's conjecture holds for the adjoint divisor of a log canonical generalized pair (see Theorem \ref{thm:loci_comparison_can}(i)).
	
	Generalized pairs were introduced by Birkar and Zhang \cite{BZ16} and have played a central role in the latest developments in the Minimal Model Program (MMP), see for instance \cite{HanLiu20,Hash22a,LT22b,Xie22,HL23,LX23b,TX23a}.
	Loosely speaking, a \emph{generalized pair} (g-pair for short) is a couple of the form $(X,B+M)$, where $(X,B)$ is a usual pair and $M$ is an auxiliary $\R$-divisor on $X$ with certain positivity properties. When one works with g-pairs with real coefficients, the $\R$-divisor $M$ is usually required to satisfy the  \emph{NQC condition}, which guarantees that the given g-pair behaves similarly to a usual pair, particularly with regards to MMP considerations. The precise definition of an \emph{NQC g-pair} is given in Definition \ref{dfn:NQC_g-pair}.
	
	\medskip
	
	The central objective of this paper is to investigate the uniruledness of the asymptotic base loci discussed above in the context of generalized pairs. This problem was studied previously by Takayama \cite{Tak08} and by Boucksom, Broustet and Pacienza \cite{BBP13} in the setting of varieties or pairs, motivated by the fact that the geometry of varieties is intimately related to the existence of rational curves on them. For instance, klt pairs with rational coefficients and nef and big anti-canonical class are rationally connected \cite{Zhang06}; and the fibers of any birational morphism to a dlt pair with rational coefficients are uniruled \cite{HM07a}.
	We emphasize that the analogues of the previous two statements in the strictly log canonical setting fail in general (see Example \ref{exa:blowup_vertex_of_cone}).
	
	Our main result concerning the uniruledness of the asymptotic base loci is the following generalization of both \cite[Theorem 1.1]{Tak08} and \cite[Theorem A]{BBP13} to the category of generalized pairs.
	
	\begin{thmA}\label{thm:loci_uniruledness_can}
		Let $(X,B+M)$ be an NQC klt g-pair such that $K_X+B+M$ is pseudo-effective. The following statements hold:
		\begin{enumerate}[\normalfont (i)]
			\item Every irreducible component of 
			\[ \nnef(K_X+B+M) = \dbs(K_X+B+M) \]
			is uniruled.
			
			\item If $K_X+B+M$ is big, then the irreducible components of both
			\[ \nnef(K_X+B+M) = \dbs(K_X+B+M) = \sbs(K_X+B+M) \]
			and $\abs(K_X+B+M)$ are uniruled.
			
			\item Assume that $|K_X+B+M|_\R \neq \emptyset$. If $B$ or $M$ is big, then the irreducible components of
			\[ \nnef(K_X+B+M) = \dbs(K_X+B+M) = \sbs(K_X+B+M) \]
			are uniruled.
		\end{enumerate}
	\end{thmA}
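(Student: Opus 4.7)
The plan is to deduce uniruledness of each base-locus component from the uniruledness of a prime divisor on a suitable birational model, following the strategies of Takayama \cite{Tak08} and Boucksom--Broustet--Pacienza \cite{BBP13}, now accessible in the generalized setting thanks to the comparison Theorem~\ref{thm:loci_comparison_can} and to recent progress on the MMP for NQC klt g-pairs (see e.g.\ \cite{BZ16, HL23, LT22b}).

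I would first establish the \emph{divisorial case} of part~(i): every prime divisor $D$ contained in $\nnef(K_X+B+M)$ is uniruled. Such a $D$ must appear in the negative part of the divisorial Zariski decomposition of $K_X+B+M$, so it is negative against this class in an asymptotic sense. Running a $(K_X+B+M)$-MMP for the NQC klt g-pair $(X,B+M)$ then contracts $D$ after finitely many steps, and the exceptional locus of a divisorial contraction of such an MMP is uniruled by the standard fiber-uniruledness theorem for birational contractions between klt pairs (cf.\ \cite{HM07a}), which extends to the NQC klt g-pair setting.

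For the \emph{higher-codimensional case} of part~(i), let $V \subseteq \nnef(K_X+B+M)$ be an irreducible component with $\codim V \geq 2$. The idea is to extract a divisor over $V$: using terminalisations or dlt modifications available for NQC g-pairs, I would construct a birational morphism $\pi : Y \to X$ admitting an exceptional prime divisor $E$ dominating $V$, together with an NQC klt g-pair structure $(Y, B_Y + M_Y)$ such that $\pi^\ast(K_X+B+M)$ is related to $K_Y+B_Y+M_Y$ up to $\pi$-exceptional effective terms. Using that $N_\sigma$ pulls back correctly under birational morphisms, one then checks that $E \subseteq \nnef(K_Y + B_Y + M_Y)$, so the divisorial case produces a covering family of rational curves on $E$ whose image covers $V = \pi(E)$. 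The extraction step is the main technical obstacle, as one must arrange simultaneously that $E$ dominates $V$, that $E$ lies in the non-nef locus upstairs, and that the NQC klt condition is preserved along the modification; this is where the flexibility of working with NQC g-pairs rather than with pairs is crucial.

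Parts~(ii) and~(iii) follow from~(i) combined with additional positivity. For~(ii), bigness of $K_X+B+M$ forces $\sbs(K_X+B+M) = \dbs(K_X+B+M)$ by a standard application of Nakayama's theory, while any component of $\abs(K_X+B+M)$ not already contained in $\sbs(K_X+B+M)$ must be contracted by the birational map to the ample model of $K_X+B+M$ produced by the MMP and the base point free theorem for NQC klt g-pairs with big adjoint class (see~\cite{BZ16, LT22b}); these contracted loci are again uniruled. For~(iii), bigness of $B$ or of $M$ allows a perturbation: writing, say, $M \sim_\R A + F$ with $A$ ample and $F \geq 0$, and setting $M' \coloneqq (1-\varepsilon)M + \varepsilon A$, $B' \coloneqq B + \varepsilon F$ for small $\varepsilon > 0$, one obtains a new NQC klt g-pair $(X, B' + M')$ with the same adjoint $\R$-linear equivalence class. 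Combined with the hypothesis $|K_X+B+M|_\R \neq \emptyset$, this perturbation upgrades $\dbs$ to $\sbs$ and reduces the statement to part~(i).
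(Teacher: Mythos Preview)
Your approach to part~(i) differs substantially from the paper's, and the higher-codimensional step contains a genuine gap. The paper does \emph{not} split into divisorial versus higher-codimensional cases, nor does it extract a divisor over $V$. Instead, it uses the identity $\dbs(K_X+B+M) = \bigcup_H \abs(K_X+B+M+H)$ (where $H$ ranges over ample $\R$-divisors) to realise every irreducible component $V$ of $\dbs(K_X+B+M)$ as a component of $\abs(K_X+B+A+M)$ for some ample $A$; after replacing $A$ by a general member of $|A|_\R$, the g-pair $\big(X,(B+A)+M\big)$ is NQC klt with big canonical class, and part~(i) is reduced wholesale to the $\abs$-statement in part~(ii).

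The gap in your extraction argument is that uniruledness does not descend along dominant morphisms: if $E$ is a prime divisor on $Y$ with $\pi(E)=V$, knowing that $E$ is uniruled tells you nothing about $V$, because the covering rational curves on $E$ may all lie in fibres of $\pi|_E$ (think of $E \cong \mathbb{P}^1 \times V$ with $V$ an abelian variety). The rational curves you produce on $E$ come from an MMP step on $Y$, and there is no mechanism linking that MMP to the map $\pi\colon Y\to X$, so there is no reason these curves should be horizontal over $V$. A secondary issue is that terminalisations or dlt modifications only extract divisors of small log discrepancy, whereas the divisorial valuation $v$ with centre $V$ and $v(\|K_X+B+M\|)>0$ witnessing $V\subseteq\nnef(K_X+B+M)$ may have arbitrarily large discrepancy; you would need a log resolution rather than a crepant extraction, and then the relation between $K_Y+B_Y+M_Y$ and $\pi^*(K_X+B+M)$ becomes more delicate.

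For part~(ii) your outline is closer to the paper's, which also runs a terminating $(K_X+B+M)$-MMP to a good minimal model and then passes to the ample model, but the paper does this carefully via an auxiliary lemma (Lemma~\ref{lem:MMP_step_abs}) tracking how each component of $\abs$ transforms under a single MMP step: either it lies in the exceptional locus of the contraction (hence is uniruled by Lemma~\ref{lem:reduction_to_Q-pairs}(ii)), or its strict transform remains a component of $\abs$ on the next model. Your sentence ``any component of $\abs$ not already in $\sbs$ must be contracted by the map to the ample model'' is not literally correct and hides exactly this bookkeeping. For part~(iii), the paper simply combines part~(i) with the equality $\dbs=\sbs$ established in Theorem~\ref{thm:loci_comparison_can}(iv); your perturbation sketch is in the same spirit but less direct.
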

	
	The main ingredients of the proof of Theorem \ref{thm:loci_uniruledness_can}, which is given in Section \ref{section:uniruledness_loci}, are the uniruledness of the irreducible components of the exceptional locus of an MMP step 
	(see Lemma \ref{lem:reduction_to_Q-pairs}(ii))
	and of a birational klt-trivial fibration
	(see Lemma \ref{lem:exc_locus_uniruled}).
	As a by-product of the latter, we show that the irreducible components of the augmented base locus of an NQC klt g-pair with nef and big canonical or anti-canonical class are uniruled (see Corollaries \ref{cor:Bir17_Q} and \ref{cor:abs_uniruledness_weak_Fano}). Note that Corollary \ref{cor:Bir17_Q} provides an affirmative answer to \cite[Question 6.4]{Bir17} over an algebraically closed field of characteristic $0$.
	
	We also present some examples in Section \ref{section:uniruledness_loci} in order to demonstrate that our results in Theorem \ref{thm:loci_uniruledness_can} are optimal. In particular, it turns out that the irreducible components of the stable or the augmented base locus of an NQC lc g-pair need not be uniruled in general. Nevertheless, we show in Section \ref{section:uniruledness_dbs_lc} that the irreducible components of the diminished base locus of an NQC lc g-pair are usually uniruled (see Theorem \ref{thm:uniruledness_dbs_can_lc}).
	
	\medskip
	
	An application of Theorem \ref{thm:loci_uniruledness_can} is the following result, which addresses the uniruledness of the asymptotic base loci associated with an arbitrary pseudo-effective $\R$-divisor on a generalized log Calabi-Yau type variety.
	
	\begin{corA}\label{cor:BBP13_CorA_g}
		Let $(X,B+M)$ be an NQC klt g-pair such that $K_X+B+M \equiv 0$ and let $D$ be an $\R$-Cartier $\R$-divisor on $X$. The following statements hold:
		\begin{enumerate}[\normalfont (i)]
			\item If $D$ is pseudo-effective, then the irreducible components of 
			\[ \nnef(D) = \dbs(D) \]
			are uniruled.
			
			\item If $D$ is big and if $K_X+B+M\sim_\R 0$, then the irreducible components of
			\[ \nnef(D) = \dbs(D) = \sbs(D) \quad \text{ and } \quad \abs(D) \]
			are uniruled.
			
			\item Assume that $|D|_\R \neq \emptyset$. If $B$ or $M$ is big and if $K_X+B+M \sim_\R 0$, then the irreducible components of
			\[ \nnef(D) = \dbs(D) = \sbs(D) \]
			are uniruled.
		\end{enumerate}
	\end{corA}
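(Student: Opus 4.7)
The strategy is to deduce all three parts from Theorem \ref{thm:loci_uniruledness_can} by constructing an auxiliary NQC klt g-pair whose adjoint class is numerically or $\R$-linearly proportional to $D$. Parts (ii) and (iii) go through almost formally, while part (i) requires an approximation to handle the weaker hypotheses that $D$ is only pseudo-effective and $K_X+B+M$ only numerically trivial.

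For (ii) and (iii), I write $D \sim_\R D_0$ for some effective $D_0$ (possible because $D$ is big in (ii) and because $|D|_\R \neq \emptyset$ in (iii)). For a rational $\lambda > 0$ small enough that the coefficients of $B + \lambda D_0$ remain in $[0,1)$, the g-pair $(X, B + \lambda D_0 + M)$ is NQC klt, and $K_X+B+M \sim_\R 0$ yields
\[
K_X + B + \lambda D_0 + M \sim_\R \lambda D .
\]
In (ii), $\lambda D$ is big and Theorem \ref{thm:loci_uniruledness_can}(ii) applies directly. In (iii), $|\lambda D|_\R \neq \emptyset$, and bigness of $B$ passes to $B + \lambda D_0$ while bigness of $M$ is preserved, so Theorem \ref{thm:loci_uniruledness_can}(iii) applies. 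The statements transfer from $\lambda D$ back to $D$ via $\R$-linear equivalence together with the invariance of $\nnef$, $\dbs$, $\sbs$, and $\abs$ under positive scaling.

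For part (i), I fix an ample $\Q$-divisor $A$. For each rational $\epsilon > 0$, the divisor $D + \epsilon A$ is big, so I pick an effective $\Delta_\epsilon \sim_\R D + \epsilon A$. For $\delta = \delta(\epsilon) > 0$ sufficiently small, $(X, B + \delta \Delta_\epsilon + M)$ is NQC klt with
\[
K_X + B + \delta \Delta_\epsilon + M \equiv \delta(D + \epsilon A) ,
\]
which is big. Since $\nnef$ and $\dbs$ are numerical invariants of pseudo-effective classes and are insensitive to positive scaling, Theorem \ref{thm:loci_uniruledness_can}(i) then gives $\nnef(D + \epsilon A) = \dbs(D + \epsilon A)$, with uniruled irreducible components, for every $\epsilon > 0$.

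It remains to pass from the perturbations to $D$ itself. For this I use the standard identities
\[
\nnef(D) = \bigcup_{\epsilon > 0} \nnef(D + \epsilon A), \qquad \dbs(D) = \bigcup_{\epsilon > 0} \dbs(D + \epsilon A) ,
\]
valid for any pseudo-effective $D$: the first from the subadditivity of $\sigma_\Gamma$ and its continuity from the interior of the pseudo-effective cone, the second from unpacking $\dbs(\,\cdot\,) = \bigcup_{A'} \sbs(\,\cdot\, + A')$ together with the openness of the ample cone. At once they give $\nnef(D) = \dbs(D)$. For an irreducible component $Z$ of this set, the generic point of $Z$ lies in some $\dbs(D + \epsilon_0 A)$, hence inside one of its (closed) uniruled irreducible components $W$; since $Z \subseteq W \subseteq \nnef(D)$ and $Z$ is a maximal irreducible closed subset of $\nnef(D)$, one gets $Z = W$, which is uniruled. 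The main technical point is thus the approximation step in (i) combined with these two limiting identities; once they are in hand, the uniruledness claim falls out componentwise.
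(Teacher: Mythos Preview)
Your proof is correct and follows essentially the same strategy as the paper: construct an auxiliary NQC klt g-pair whose adjoint divisor is numerically (or $\R$-linearly) proportional to $D$, then apply Theorem~\ref{thm:loci_uniruledness_can}. Two small points are worth flagging. First, the justification ``coefficients of $B+\lambda D_0$ remain in $[0,1)$'' is not the right criterion for klt outside the log smooth case; what you actually need is the openness of the klt condition (as in \cite[Remark 4.2(2)]{BZ16}), which gives the same conclusion for $\lambda\ll 1$. Second, in part~(i) the paper uses the identity $\dbs(D)=\bigcup_H \abs(D+H)$ from \cite[Lemma~1.14]{ELMNP06} together with Theorem~\ref{thm:loci_uniruledness_can}(ii), rather than your $\dbs(D)=\bigcup_\epsilon \dbs(D+\epsilon A)$ together with Theorem~\ref{thm:loci_uniruledness_can}(i); the advantage is that $\abs(D+A)$ is manifestly Zariski-closed, so the component $W$ you pick is automatically closed in $X$ --- a point your write-up asserts parenthetically but does not justify (it does hold, but you would need Theorem~\ref{thm:loci_uniruledness_can}(ii) or Theorem~\ref{thm:loci_comparison_can}(iii) to see that $\dbs(D+\epsilon_0 A)$ is closed).
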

	
	Corollary \ref{cor:BBP13_CorA_g} improves both \cite[Theorem 1.2]{Tak08} and \cite[Corollary A]{BBP13}. In particular, it implies that if $(X,B)$ is a klt pair such that $-(K_X+B)$ is nef and if $D$ is a pseudo-effective (resp.\ big) $\R$-divisor on $X$, then the irreducible components of $\dbs(D)$ (resp.\ both $\dbs(D) = \sbs(D)$ and $\abs(D)$) are uniruled.
	
	Finally, we highlight that the hypothesis that the given $\R$-divisor is big can be replaced in both Theorem \ref{thm:loci_uniruledness_can}(ii) and Corollary \ref{cor:BBP13_CorA_g}(ii) by the weaker assumption that it is \emph{abundant}, i.e., its invariant Iitaka dimension and numerical dimension coincide. More precisely, we obtain the following refined statements, which are new even in the setting of usual pairs.
	
	\begin{corA}\label{cor:loci_uniruledness_abundant}
		Let $(X,B+M)$ be an NQC klt g-pair. The following statements hold:
		\begin{enumerate}[\normalfont (i)]
			\item If $K_X+B+M$ is pseudo-effective and abundant, then the irreducible components of 
			\[ \nnef(K_X+B+M) = \dbs(K_X+B+M) = \sbs(K_X+B+M) \]
			are uniruled.
			
			\item If $K_X+B+M\sim_\R 0$ and if $D$ is a pseudo-effective and abundant $\R$-divisor on $X$, then the irreducible components of 
			\[ \nnef(D) = \dbs(D) = \sbs(D) \]
			are uniruled.
		\end{enumerate}
	\end{corA}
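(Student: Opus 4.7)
The plan is to reduce the abundant case to the big case treated in Theorem~\ref{thm:loci_uniruledness_can}(ii), by running a $(K_X+B+M)$-MMP to a good minimal model and then comparing the asymptotic base loci on the source and the target.

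For part (i), the key input is the existence of a good minimal model for an NQC klt g-pair whose adjoint class is pseudo-effective and abundant; this is by now available from the recent generalized MMP literature cited in the introduction. Granting this, fix a birational contraction $\varphi \colon X \dashrightarrow X'$ produced by such an MMP, so that $K_{X'}+B'+M'$ is semiample, where $B'$ and $M'$ are the strict transforms. On a common resolution $p \colon W \to X$, $q \colon W \to X'$, the negativity lemma yields
\[ p^*(K_X+B+M) = q^*(K_{X'}+B'+M') + E \]
with $E \geq 0$ and $q$-exceptional. Since $q^*(K_{X'}+B'+M')$ is semiample, any point $x \in X \setminus \Exc(\varphi)$ admits an effective representative of $K_X+B+M$ avoiding it (pull back a suitable representative of $K_{X'}+B'+M'$ missing $\varphi(x)$ and add $p_*E$, which is supported on $\Exc(\varphi)$); hence $\sbs(K_X+B+M) \subseteq \Exc(\varphi)$. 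The reverse inclusion $\Exc(\varphi) \subseteq \nnef(K_X+B+M)$ is a standard MMP fact: every divisor contracted in a step has strictly positive asymptotic vanishing order, and flipping loci are similarly captured. Combined with Theorem~\ref{thm:loci_comparison_can}(i), this forces
\[ \nnef(K_X+B+M) = \dbs(K_X+B+M) = \sbs(K_X+B+M), \]
and the uniruledness of the components then follows from Theorem~\ref{thm:loci_uniruledness_can}(i).

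For part (ii), since $D$ is pseudo-effective and abundant its invariant Iitaka dimension is non-negative, so $D \sim_\R D_0$ for some effective $\R$-divisor $D_0$. For sufficiently small $\epsilon > 0$, the datum $(X, B + \epsilon D_0 + M)$ is again an NQC klt g-pair (the $M$ part, and hence the NQC condition, is unchanged, while perturbing $B$ by $\epsilon D_0$ preserves the klt condition for $\epsilon$ small), and using $K_X+B+M \sim_\R 0$ its adjoint divisor satisfies
\[ K_X + B + \epsilon D_0 + M \sim_\R \epsilon D_0, \]
which is pseudo-effective and abundant. Applying part (i) to this g-pair yields the conclusion for $\epsilon D_0$, and hence for $D$, since each of the three loci depends only on the $\R$-linear equivalence class of the divisor and is invariant under positive scaling.

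The main obstacle is the generalized MMP input at the start of part (i): namely, the existence of a good minimal model for an NQC klt g-pair with pseudo-effective and abundant adjoint class. Once this is granted, the rest of the argument is a routine application of the negativity lemma combined with the comparison and uniruledness results already established in the paper.
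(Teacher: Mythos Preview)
Your approach is correct and essentially the same as the paper's: the equality of loci in (i) is exactly Corollary~\ref{cor:loci_comparison_abundant}(i), which the paper proves by combining Theorem~\ref{thm:EGMM_klt_abundant} (existence of a good minimal model in the abundant case) with Lemma~\ref{lem:loci_non_positive}, and uniruledness then follows from Theorem~\ref{thm:loci_uniruledness_can}(i). For (ii), the paper likewise reduces to (i) via the same boundary perturbation you describe (packaged as Corollary~\ref{cor:loci_comparison_abundant}(ii) together with Corollary~\ref{cor:BBP13_CorA_g}(i)); note that your detour through $\Exc(\varphi)$ is unnecessary, since Lemma~\ref{lem:loci_non_positive} gives $\nnef = \sbs$ directly from semiampleness on the target.
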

	
	\emph{Acknowledgements}: We would like to thank J.\ Baudin, F.\ Bernasconi, P.\ Chaudhuri, S.\ Filipazzi, E.\ Floris, L.\ Heuberger, G.\ Martin, N.\ M\"uller, G.\ Pacienza, Z.\ Patakfalvi, J.E.\ Rodr\'iguez Camargo, J.\ Schneider, L.\ Xie, C.\ Zhou and S.\ Zikas for many useful discussions related to this work. We would also like to thank O.\ Fujino for answering our questions and for informing us about \cite[Theorem 1.12]{Fuj21b}, and V.\ Lazi\'c for many valuable comments and suggestions. Finally, we thank the referee for their suggestions to improve the exposition of the paper and for pointing out an easy proof of Theorem \ref{thm:uniruledness_dbs_can_lc}.

	\section{Preliminaries}
	
	We use the terminology of \cite{KM98,Fuj17book} for standard notions in birational geometry (e.g., pairs and their singularities). We refer to \cite{BZ16,HL23,TX23a} and the relevant references therein for the fundamentals of generalized pairs; see also Subsection \ref{subsection:g-pairs}.

	\subsection{Divisors and maps}
	
	Let $X$ be a normal projective variety. Denote by 
	\[ \Div_{\K}(X) \coloneqq \Div(X) \otimes_{\Z} \K \]
	the $\K$-vector space of $\K$-Cartier $\K$-divisors on $X$, where $ \K \in \{ \Q, \R \} $.
	Fix $D\in \Div_{\R}(X)$ and set
	\begin{itemize}
		\item $|D|_\Q \coloneqq \big\{ \Delta \mid \Delta \text{ is an effective $\R$-divisor on $X$ such that } \Delta \sim_\Q D \big\} $,
		
		\item $|D|_\R \coloneqq \big\{ \Delta \mid \Delta \text{ is an effective $\R$-divisor on $X$ such that } \Delta \sim_\R D \big\} $, and 
		
		\item $|D|_\equiv \coloneqq \big\{ \Delta \mid \Delta \text{ is an effective $\R$-divisor on $X$ such that } \Delta \equiv D \big\} $.
	\end{itemize}
	Recall also that $D$ is said to be \emph{NQC}, which stands for \emph{nef $\Q$-Cartier combinations} \cite{HanLi22}, if it is a non-negative linear combination of nef $\Q$-Cartier divisors on $X$.
	
	We denote by $\kappa_\iota(X,D)$ the \emph{invariant Iitaka dimension} of $D$ and by $\kappa_\sigma(X,D)$ the \emph{numerical dimension} of $D$, see \cite[Chapter V]{Nak04} and \cite[Chapter 2]{Fuj17book}. We say that $D$ is \emph{abundant} if the equality $\kappa_\iota(X,D) = \kappa_\sigma(X,D)$ holds. We also recall that $\kappa_\iota(X,D) = \kappa(X,D)$ when $D\in\Div_{\Q}(X)$; see \cite[Proposition 2.5.9]{Fuj17book}.
	
	\medskip
	
	A \emph{fibration} is a projective surjective morphism with connected fibers, and a \emph{birational contraction} is a birational map whose inverse does not contract any divisors. We also recall below the notion of a $D$-non-positive map.
	
	\begin{dfn}
		Let $\varphi \colon X\dashrightarrow Y$ be a birational contraction between normal varieties, let $D\in\Div_{\R}(X)$ and assume that $\varphi_*D$ is $\R$-Cartier. We say that $\varphi$ is \emph{$D$-non-positive} if there exists a resolution of indeterminacies $(p,q) \colon W \to X \times Y$ of $\varphi$ such that $ W $ is smooth and 
		\[ p^*D \sim_\R q^* (\varphi_*D) + E , \]
		where $E$ is an effective $q$-exceptional $\R$-divisor on $W$. 
	\end{dfn}

	\subsection{Asymptotic base loci}
	\label{subsection:asymptotic_base_loci}
	
	First, we collect below the basic properties of the stable, the diminished and the augmented base loci associated with an $\R$-Cartier $\R$-divisor on a normal projective variety. For more information we refer to \cite{Nak04,ELMNP06,BBP13,CDB13}.
	
	\begin{rem}~
		\label{rem:sbs_properties}
		\begin{enumerate}[(1)]
			\item If $D\in\Div_{\Q}(X)$, then it follows from \cite[Lemma 3.5.3]{BCHM10} that the definition of the stable base locus of $D$ given in Section \ref{section:intro} coincides with the usual definition given in 
			\cite[Section 1, pp.\ 1705-1706]{ELMNP06}. In particular, by \cite[Proposition 2.1.21]{Laz04} there exists a positive integer $m$ such that 
			\[ \sbs(D) = \Bs |kmD| \ \text{ for all } k \gg 0 . \]
			
			\item For any $ \varepsilon \in \R_{>0} $ it holds that
			\[ \sbs(\varepsilon D) = \sbs(D) . \]
			
			\item If $D_1,D_2\in\Div_{\R}(X)$, then
			\[ \sbs(D_1+D_2) \subseteq \sbs(D_1)  \cup\sbs(D_2) . \]
			
			\item If $D\in\Div_{\Q}(X)$, then $D$ is semi-ample if and only if $\sbs(D) = \emptyset$. Therefore, for any semi-ample $\R$-divisor $S$ on $X$ we have $\sbs(S) = \emptyset$.
			
			\item If $f \colon Y \to X$ is a birational morphism from a normal projective variety $Y$, then 
			\[ \sbs(f^* D) = f^{-1} \big( \sbs(D) \big) ; \]
			see the proof of \cite[Lemma 2.3]{LMT23}.
		\end{enumerate}
	\end{rem}
	
	\begin{rem}~
		\label{rem:abs+dbs_properties}
		\begin{enumerate}[(1)]
			\item The loci $\abs(D)$ and $\dbs(D)$ depend only on the numerical equivalence class of $D$; see \cite[Proposition 1.4 and Proposition 1.15(ii)]{ELMNP06}, respectively. However, this does not hold in general for $\sbs(D)$; see \cite[Example 1.1]{ELMNP06}.
			
			\item For any $ \varepsilon \in \R_{>0} $, by \cite[Example 1.8 and Proposition 1.15(i)]{ELMNP06} we have 
			\[ \abs( \varepsilon D ) = \abs(D) \quad \text{ and } \quad \dbs( \varepsilon D ) = \dbs(D). \]
			
			\item By \cite[Examples 1.7 and 1.18]{ELMNP06} the following hold:
			\begin{itemize}
				\item $\abs(D) = \emptyset$ if and only if $D$ is ample;
				
				\item $\abs(D) = X$ if and only if $D$ is not big;
				
				\item $\dbs(D) = \emptyset$ if and only if $D$ is nef;
				
				\item $\dbs(D) = X$ if and only if $D$ is not pseudo-effective.
			\end{itemize}
			
			\item It follows from \cite[Proposition 1.1]{ELMNP09} and \cite[Lemmata 2.12 and 2.13]{CDB13} that the loci $\dbs(D)$, $\sbs(D)$ and $\abs(D)$ do not contain any isolated points.
			
			\item Let $f \colon Y \to X$ be a birational morphism from a normal projective variety $Y$. If $D \in \Div_\R(X)$ is big and if $E \in \Div_\R(Y)$ is effective and $f$-exceptional, then by \cite[Proposition 2.3]{BBP13} we have
			\[ \abs( f^* D + E ) = f^{-1} \big( \abs(D) \big) \cup \Exc(f) . \]
		\end{enumerate}
	\end{rem}
	
	The following two remarks are useful for the computation of the diminished and the augmented base locus of a given divisor.
	
	\begin{rem}
		\label{rem:dbs_negative_intersection}
		Let $X$ be a normal projective variety and let $D \in \Div_{\R}(X)$. Then $\dbs(D)$ contains all curves on $X$ which intersect $D$ negatively. Indeed, this is trivial if $D$ is not pseudo-effective, so we assume now that $D$ is pseudo-effective. If $C$ is a curve on $X$ such that $D \cdot C < 0$, then we can find an ample $\R$-divisor $A$ on $X$ such that $(D+A) \cdot C < 0$, so for every $G \in |D+A|_\R$ we have $G \cdot C < 0$, which implies that $C \subseteq \Supp G$. Hence, $C \subseteq \sbs(D+A) \subseteq \dbs(D)$, which yields the above assertion.
		On the other hand, \cite[Example 1.5]{DO23} indicates that in general $\dbs(D)$ may also contain curves $\gamma$ on $X$ such that $D \cdot \gamma > 0$.
		
		Assume now that the given $\R$-divisor $D$ is big and consider a curve $C$ on $X$. If $C$ is not contained in $\abs(D)$, then $D \cdot C > 0$. However, if $C$ is contained in $\abs(D)$, then it does not necessarily hold that $D \cdot C \leq 0$ in general. We refer to \cite[Remark 2.8]{DO23} for the details.
	\end{rem}
	
	\begin{rem}
		\label{rem:abs_alt_description}
		According to \cite[Example 1.10]{ELMNP06}, if $X$ is a smooth projective variety and if $D$ is a nef and big $\R$-divisor on $X$, then $\abs(D)$ is given as the union of all positive-dimensional subvarieties $V$ of $X$ such that $D^{\dim V} \cdot V = 0$, namely, the union of all positive-dimensional subvarieties of $X$ on which $D$ is not big. We refer to \cite{CDB13,Bir17} for the generalization of the previous result to the singular setting.
	\end{rem}
	
	In the remainder of this subsection we discuss the non-nef locus of an $\R$-Cartier $\R$-divisor on a normal projective variety.
	
	\begin{dfn}
		\label{dfn:avo}
		Let $X$ be a normal projective variety, let $D\in\Div_{\R}(X)$ and let $v$ be a divisorial valuation over $X$. 
		\begin{enumerate}[(a)]
			\item If $D$ is big, set
			\[ v \big( \| D \| \big) \coloneqq \inf \big\{ v(\Delta) \mid \Delta \in |D|_\equiv \big\} . \]
			
			\item If $D$ is pseudo-effective, set
			\[ v \big( \| D \| \big) \coloneqq \lim_{\varepsilon \to 0^+} v \big( \| D + \varepsilon A \| \big) , \]
			where $A$ is an ample $\R$-divisor on $X$. Note that this definition does not depend on the choice of $A$; see the proof of \cite[Lemma III.1.5(2)]{Nak04}.
		\end{enumerate}
		The quantity $ v \big( \| D \| \big) $ is called the \emph{asymptotic vanishing order} of $D$ along $v$.
	\end{dfn}
	
	\begin{rem}
		\label{rem:avo_properties}
		We use the same notation as in Definition \ref{dfn:avo} and we also assume that $D$ is pseudo-effective.
		\begin{enumerate}[(1)]
			\item If $D$ is big, then
			\[ v \big( \| D \| \big) = \inf \big\{ v(\Delta) \mid \Delta \in |D|_\R \big\} ; \]
			see the proof of \cite[Lemma III.1.4(3)]{Nak04}.
			
			\item By \cite[Lemma 2.4]{BBP13}, $ v \big( \| D \| \big) $ is a birational invariant, i.e., for any birational morphism $f \colon Y \to X$ from a normal projective variety $Y$ we have
			\[ v \big( \| D \| \big) = v \big( \| f^*D \| \big) . \]
			
			\item If $\lambda >0$, then 
			\[ v \big( \| \lambda D \| \big) = \lambda v \big( \| D \| \big) . \]
			If $D'$ is another pseudo-effective $\R$-divisor on $X$, then
			\[ v \big( \| D + D' \| \big) \leq v \big( \| D \| \big) + v \big( \| D' \| \big) . \]
		\end{enumerate}
	\end{rem}
	
	\begin{dfn}
		\label{dfn:nnef}
		Let $X$ be a normal projective variety and let $D\in\Div_{\R}(X)$. If $D$ is pseudo-effective, then the \emph{non-nef locus} of $D$ is defined as
		\[ \nnef(D) \coloneqq \bigcup \big\{ c_X(v) \mid v \big( \| D \| \big) > 0 \big\} , \]
		where $c_X(v)$ denotes the center on $X$ of a given divisorial valuation $v$ over $X$. Otherwise, by convention we set $\nnef(D) \coloneqq X$.
	\end{dfn}
	
	\begin{rem}
		\label{rem:nnef_properties}
		We use the same notation as in Definition \ref{dfn:nnef}.
		\begin{enumerate}[(1)]
			\item The locus $\nnef(D)$ depends only on the numerical equivalence class of $D$.
			
			\item For any $ \varepsilon \in \R_{>0} $ it holds that \[ \nnef( \varepsilon D) = \nnef(D) . \]
			
			\item We have $\nnef(D) = \emptyset$ if and only if $D$ is nef; see \cite[Remark III.2.8]{Nak04}.
			
			\item By \cite[Lemma 2.6]{BBP13}, it holds that
			\[ \nnef(D) \subseteq \dbs(D) . \]
			
			\item For any resolution $\pi \colon Y \to X $ of $X$, we have
			\[ \nnef(D) = \pi \big( \nnef(\pi^* D)\big) \]
			by Remark \ref{rem:avo_properties}(2). Since \cite[Lemma V.1.9(1)]{Nak04} implies $ \nnef( \pi^*D ) = \dbs( \pi^*D ) $, by \cite[Proposition 1.19]{ELMNP06} we infer that $\nnef(D)$ is an at most countable union of Zariski-closed subsets of $X$.
		\end{enumerate}
	\end{rem}

	\subsection{Nakayama--Zariski decomposition}
	\label{subsection:NZD}
	
	Given a smooth projective variety $X$ and a pseudo-effective $\R$-divisor $D$ on $X$, Nakayama \cite{Nak04} defined a decomposition 
	$ D = P_\sigma (D) + N_\sigma (D) $, known as the \emph{Nakayama--Zariski decomposition} of $D$. This decomposition has been extended to the singular setting; see for instance \cite[Section 4]{BH14b} or \cite[Section 3]{LX23a}. We recall here its definition for the convenience of the reader.
	
	\begin{dfn}
		Let $X$ be a normal projective variety and let $D$ be a pseudo-effective $\R$-divisor on $X$. Given a prime divisor $\Gamma$ on $X$, denote by $\sigma_\Gamma (D)$ the asymptotic vanishing order of $D$ along $\Gamma$. Set
		\[ N_\sigma (D) \coloneqq \sum_\Gamma \sigma_\Gamma(D) \cdot \Gamma \quad \text{ and } \quad P_\sigma(D) \coloneqq D-N_\sigma(D) , \] 
		where the above formal sum runs through all prime divisors $\Gamma$ on $X$. Both $N_\sigma(D)$ and $P_\sigma(D)$ are $\R$-divisors on $X$, $N_\sigma(D)$ is effective and $P_\sigma(D)$ is movable. The decomposition 
		$$ D = P_\sigma (D) + N_\sigma (D) $$
		is called the \emph{Nakayama--Zariski decomposition} of $D$.
	\end{dfn}
	
	We gather below some basic properties of the Nakayama--Zariski decomposition of a pseudo-effective $\R$-divisor that will be often used in the sequel. For further properties of this decomposition we refer to \cite[Chapter III]{Nak04}.
	
	\begin{rem}
		\label{rem:NZD_properties}
		Let $X$ be a normal projective variety and let $D$ be a pseudo-effective $\R$-divisor on $X$.
		\begin{enumerate}[(1)]
			\item The irreducible components of $N_\sigma(D)$ coincide with the divisorial components of $\dbs(D)$; see \cite[Lemma 3.7(4)]{LX23a}.
			
			\item If $D$ is movable, then $N_\sigma(D) = 0$; see \cite[Lemma 4.1(6)]{BH14b}.
			
			\item If $f \colon Y \to X$ is a birational morphism from a normal projective variety $Y$ and if $E$ is an effective $f$-exceptional $\R$-Cartier $\R$-divisor on $Y$, then
			\[ N_\sigma \big( f^*D + E \big) = N_\sigma \big( f^*D \big) + E ; \]
			see \cite[Lemma 4.1(2)]{BH14b} or \cite[Lemma 2.4]{LP20a}.
			In particular, we have $ N_\sigma(E) = E $. Therefore, \[\dbs(E) = \sbs(E) = \Supp(E) . \]
		\end{enumerate}
	\end{rem}

	\subsection{Generalized pairs and minimal models}
	\label{subsection:g-pairs}
	
	For the convenience of the reader we first recall the definition of generalized pairs, as well as the definition of two fundamental classes of singularities of generalized pairs (i.e., Kawamata log terminal and log canonical), which will be used throughout the paper.
	
	\begin{dfn}
		\label{dfn:NQC_g-pair}
		A \emph{generalized pair}, abbreviated as \emph{g-pair}, consists of 
		\begin{itemize}
			\item a normal projective variety $ X $,
			
			\item an effective $ \R $-divisor $ B $ on $X$,
			
			\item a projective birational morphism $ f \colon W \to X $ from a normal variety $ W $ and an $\R$-Cartier $\R$-divisor $ M_W $ on $ W $ which is nef,
		\end{itemize}
		such that the $\R$-divisor $ K_X + B + M $ is $ \R $-Cartier, where $ M \coloneqq f_* M_W $.
		We denote a g-pair simply by $(X,B+M)$, but remember the whole g-pair structure, and we call it \emph{NQC} if $M_W$ is an NQC $\R$-divisor on $ W $, i.e., $M_W = \sum_{j=1}^\ell \mu_j M_j'$, where $\mu_j \in \R_{\geq 0}$ and $M_j'$ are nef $\Q$-Cartier divisors on $W$.
	\end{dfn}
	
	Recall that a g-pair $(X,B+M)$ with data $\operatorname{Id}_X \colon X \to X$ and $M_X = M$ is called a \emph{polarized pair}, see \cite{BH14b}, while by taking $M_W = M = 0$ one recovers the notion of a usual \emph{pair}, see \cite{KM98,Fuj17book}. When all $\R$-divisors involved are actually $\Q$-divisors, we use the term \emph{$\Q$-g-pair} to refer to such a generalized pair; in this case, the NQC condition is automatically satisfied.
	
	\begin{dfn}
		Let $ (X,B+M) $ be a g-pair with data $ f \colon W \to X $ and $ M_W $. Let $ E $ be a prime divisor on some birational model of $ X $. After replacing $W$, we may assume that $ E $ is a prime divisor on $ W $. If we write 
		\[ K_W + B_W + M_W \sim_\R f^* ( K_X + B + M ) \]
		for some $ \R $-divisor $ B_W $ on $ W $, then the \emph{discrepancy of $ E $ with respect to $ (X,B+M) $} is defined as
		\[ a(E, X, B+M) \coloneqq - \mult_E B_W . \]
		
		We say that the g-pair $ (X,B+M) $ is:
		\begin{enumerate}[(a)]
			\item \emph{klt} if $  a(E,X,B+M) > -1 $ for any divisorial valuation $ E $ over $ X $, and
			
			\item \emph{lc} if $  a(E,X,B+M) \geq -1 $ for any divisorial valuation $ E $ over $ X $.
		\end{enumerate}
	\end{dfn}
	
	In particular, if $(X,B+M)$ is a polarized pair, that is, $M$ is a nef $\R$-divisor on $X$, then it follows readily from the definition that $(X,B+M)$ is klt (resp.\ lc) if and only if the underlying pair $(X,B)$ is klt (resp.\ lc). This fact will be used in the paper without explicit mention.
	
	Given a (not necessarily $\Q$-factorial) NQC lc g-pair $(X,B+M)$, by the Cone theorem \cite[Theorem 1.1]{HL23}, the Contraction theorem \cite[Theorem 1.5]{Xie22}, \cite[Theorem 1.7]{CLX23}, and the existence of flips \cite[Theorem 1.2]{LX23b}, we may run a $(K_X+B+M)$-MMP, although its termination is not known in general.
	
	\medskip
	
	The next result allows us to reduce certain statements about NQC klt g-pairs with real coefficients to analogous results about klt pairs with rational coefficients.
	
	\begin{lem}
		\label{lem:reduction_to_Q-pairs}
		Let $(X,B+M)$ be an NQC klt g-pair.
		\begin{enumerate}[\normalfont (i)]
			\item There exists an effective $\Q$-divisor $B'$ on $X$ such that $(X,B')$ is a klt $\Q$-pair.
			
			\item Assume that $K_X+B+M$ is not nef and consider the contraction $g \colon X \to Y$ of a $(K_X+B+M)$-negative extremal ray $R$. Then there exists an effective $\Q$-divisor $\Delta$ on $X$ such that $(X,\Delta)$ is a klt $\Q$-pair and $(K_X+\Delta)\cdot R < 0$. In particular, every irreducible component of $\Exc(g)$ is uniruled, i.e., it is covered by rational curves.
		\end{enumerate}
	\end{lem}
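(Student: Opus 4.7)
For part (i), I plan to exploit the NQC structure of $M$ to upgrade the g-pair structure to an honest klt $\Q$-pair. Writing $M_W = \sum_{j=1}^{\ell} \mu_j M_j'$ with $\mu_j \in \R_{\geq 0}$ and $M_j'$ nef $\Q$-Cartier on $W$, I would fix an ample $\Q$-divisor $A_W$ on $W$ and, for small rational $\epsilon_j > 0$, invoke Bertini to pick a general effective $\Q$-divisor $N_j \sim_\Q M_j' + \epsilon_j A_W$ (ample by the choice of $\epsilon_j$) with mild singularities. Pushing forward to $X$ yields an effective $\R$-divisor $D$ such that $(X, B + D)$ is klt as an $\R$-pair, with $K_X + B + D$ still $\R$-Cartier. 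A final rational approximation, using the openness of the klt condition and the density of $\Q$-Cartier divisors inside the relevant finite-dimensional subspace of $\R$-Cartier divisors, produces the desired effective $\Q$-divisor $B'$ with $(X, B')$ klt. Statements of this flavor are standard in the g-pair MMP literature; compare \cite{BZ16, HL23}.

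For part (ii), I would run the same construction, now choosing the parameters $\epsilon_j$ and the approximating rational coefficients small enough that the resulting divisor $K_X + \Delta$ is a small perturbation of $K_X + B + M$ in the relevant finite-dimensional space. Since intersection with $R$ is a continuous linear functional on $\R$-Cartier $\R$-divisors and $(K_X + B + M) \cdot R < 0$, such a sufficiently small perturbation automatically satisfies $(K_X + \Delta) \cdot R < 0$.

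For the final uniruledness assertion: once $(X, \Delta)$ is a klt $\Q$-pair with $(K_X + \Delta) \cdot R < 0$, the morphism $g$ is also the contraction of the $(K_X + \Delta)$-negative extremal ray $R$, since the extremal contraction depends only on the ray. By the Cone theorem for klt $\Q$-pairs, $R$ is spanned by classes of rational curves, and by the length-of-extremal-rays bound (via Mori's bend-and-break), through every point of $\Exc(g)$ passes such a rational curve. Since any rational curve through a general point of an irreducible component $Z$ of $\Exc(g)$ is contained in $Z$ by irreducibility, every irreducible component of $\Exc(g)$ is uniruled.

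The principal difficulty lies in part (i): simultaneously controlling effectivity, klt singularities, rationality of coefficients, and the $\Q$-Cartier property of $K_X + B'$ demands care when $X$ is not assumed $\Q$-factorial, since one must stay within the affine subspace of $\R$-Cartier divisors where $K_X + \,\cdot\,$ remains $\Q$-Cartier. Once (i) is in hand, part (ii) follows from a routine continuity argument, and the uniruledness of $\Exc(g)$ reduces to classical extremal-ray theory for klt $\Q$-pairs.
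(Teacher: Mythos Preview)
Your outline is essentially correct, and for part (i) it is a reasonable sketch of the standard reduction that the paper simply cites: the passage from an NQC klt g-pair to a klt $\R$-pair $(X,\Gamma)$ is roughly the content of \cite[Lemma~3.4]{HanLi22}, and the subsequent rational approximation rests on the fact that the $\R$-boundaries $\Gamma'$ with $K_X+\Gamma'$ $\R$-Cartier and $(X,\Gamma')$ lc cut out a rational polytope (cf.\ \cite[Theorem~1.4]{Chen23}). These are exactly the two references the paper invokes.

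For part (ii) the paper proceeds differently. After first adding a small ample to reduce to a klt $\R$-pair $(X,\Gamma)$ with $(K_X+\Gamma)\cdot R<0$, it uses \cite{Chen23} to write $\Gamma=\sum r_j\Gamma_j$ as a convex combination of klt $\Q$-boundaries with $K_X+\Gamma=\sum r_j(K_X+\Gamma_j)$, and then selects a $\Gamma_j$ with $(K_X+\Gamma_j)\cdot R<0$ via an argument exploiting $\rho(X/Y)=1$. Your route---perturb within the rational affine subspace where $K_X+(\,\cdot\,)$ stays $\Q$-Cartier and invoke continuity of the linear functional $(\,\cdot\,)\cdot R$---is equally valid and arguably more direct; both arguments ultimately rest on the same rational-polytope input. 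One caveat for your version: in your construction $D=f_*\bigl(\sum\mu_j N_j\bigr)$ is not itself a small divisor (the $\mu_j$ are fixed), so you should argue via $D-M\sim_\R f_*\bigl(\sum\mu_j\epsilon_j A_W\bigr)$ that $(K_X+B+D)\cdot R$ is close to $(K_X+B+M)\cdot R$, rather than appealing directly to smallness of coefficients.

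For the uniruledness, the paper cites Kawamata's result \cite[Theorem~1]{Kaw91} (see also \cite[Corollary~1.3(2)]{HM07a}), which is precisely the statement underlying your bend-and-break sketch. One small imprecision: your claim that ``any rational curve through a general point of $Z$ is contained in $Z$ by irreducibility'' is not literally correct; the point is rather that the rational curves produced are contracted by $g$, hence lie in $\Exc(g)$, and a curve in $\Exc(g)$ through a general point of $Z$ (one lying on no other component) must then lie in $Z$.
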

	
	\begin{proof}~
		
		\medskip
		
		\noindent (i) The assertion follows from \cite[Lemma 3.4]{HanLi22} and \cite[Theorem 1.4]{Chen23}.
		
		\medskip
		
		\noindent (ii) Since $ (K_X+B+M) \cdot R < 0 $, we may find an effective ample $\R$-divisor $A$ on $X$ such that $ (K_X+B+M+A) \cdot R < 0 $ and $\big(X, (B+A)+M\big)$ is an NQC klt g-pair. Thus, by \cite[Lemma 3.4]{HanLi22}, there exists an effective $\R$-divisor $\Gamma$ on $X$ such that $(X,\Gamma)$ is a klt pair and 
		$ K_X+\Gamma \sim_\R K_X+B+A+M $, so we have
		\begin{equation}\label{eq:1_reduct}
			(K_X + \Gamma) \cdot R < 0 .
		\end{equation}
		
		Next, by \cite[Theorem 1.4]{Chen23} we find positive real numbers $r_1,\dots,r_\ell$ and effective $\Q$-divisors $\Gamma_1,\dots, \Gamma_\ell$ on $X$ such that
		$ \sum_{j=1}^\ell r_j=1 $, $ \Gamma = \sum_{j=1}^\ell r_j \Gamma_j $,
		$(X,\Gamma_j)$ is a klt $\Q$-pair for every $ j \in \{ 1, \dots, \ell \} $, and we have
		\begin{equation}\label{eq:2_reduct}
			K_X + \Gamma = \sum_{j=1}^\ell r_j \big( K_X + \Gamma_j \big) . 
		\end{equation}
		Since $\rho(X/Y) = 1$, for each $ j \in \{ 1, \dots, \ell \} $ there exists $\alpha_j \in \R$ such that
		\begin{equation}\label{eq:3_reduct}
			K_X+\Gamma \equiv_Y \alpha_j (K_X+\Gamma_j) .
		\end{equation}
		Since $-(K_X+\Gamma)$ is ample over $Y$ by \eqref{eq:1_reduct}, it holds that $\alpha_j \neq 0 $ for every $ j \in \{ 1, \dots, \ell \} $. Moreover, at least one of the $\alpha_j$, say $\alpha_1$, must be positive: otherwise by \eqref{eq:3_reduct} each $\Q$-divisor $K_X+\Gamma_j$ is ample over $Y$, and hence the $\R$-divisor $K_X+\Gamma$ is also ample over $Y$ by \eqref{eq:2_reduct}, which contradicts \eqref{eq:1_reduct}. Therefore, the $\Q$-divisor ${-}(K_X+\Gamma_1)$ is ample over $Y$. In conclusion, the $\Q$-divisor $\Delta \coloneqq \Gamma_1$ has the desired properties.
		
		The last part of the statement follows immediately from \eqref{eq:1_reduct} and \cite[Theorem 1]{Kaw91}; see also \cite[Corollary 1.3(2)]{HM07a} and \cite[Theorem 1.12]{Fuj21b}.
	\end{proof}
	
	For the sake of completeness and for future reference, we prove a variant of Lemma \ref{lem:reduction_to_Q-pairs}(ii) in the lc setting, although this result will not be needed in the paper.
	
	\begin{lem}
		\label{lem:exc_locus_uniruled_generalizations}
		The following statements hold:
		\begin{enumerate}[\normalfont (i)]
			\item Let $(X,B)$ be an lc pair. Assume that $K_X+B$ is not nef and consider the contraction $g \colon X \to Y$ of a $(K_X+B)$-negative extremal ray. Then every irreducible component of $\Exc(g)$ is uniruled.
			
			\item Let $(X,B+M)$ be an NQC lc g-pair such that $(X,0)$ is klt. Assume that $K_X+B+M$ is not nef and consider the contraction $g \colon X \to Y$ of a $(K_X+B+M)$-negative extremal ray. Then every irreducible component of $\Exc(g)$ is uniruled.
		\end{enumerate}
	\end{lem}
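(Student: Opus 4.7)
The strategy is to reduce each part to the klt setting already handled by Lemma~\ref{lem:reduction_to_Q-pairs}(ii), whose proof itself concludes by invoking \cite[Theorem 1]{Kaw91} and its extensions \cite[Corollary 1.3(2)]{HM07a}, \cite[Theorem 1.12]{Fuj21b}. For (ii), the plan is to use the hypothesis that $(X,0)$ is klt to upgrade $(X,B+M)$ to an NQC klt g-pair via a $(1-\epsilon)$-scaling of its data. Keeping the birational datum $f\colon W \to X$, I consider the g-pair $(X, (1-\epsilon)(B+M))$ with data $f$ and $(1-\epsilon) M_W$ for small $\epsilon \in (0,1)$. Comparing the defining identities $K_W + B_W + M_W = f^*(K_X+B+M)$ and $K_W + B'_W + (1-\epsilon) M_W = f^*\bigl(K_X+(1-\epsilon)(B+M)\bigr)$ one finds $B'_W = (1-\epsilon) B_W + \epsilon(f^*K_X - K_W)$, which gives the linearity identity
\[ a\bigl(E, X, (1-\epsilon)(B+M)\bigr) = (1-\epsilon)\,a(E,X,B+M) + \epsilon\,a(E,X,0) \]
for every divisorial valuation $E$ over $X$. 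Since $(X,B+M)$ is lc and $(X,0)$ is klt, the right-hand side is strictly greater than $-1$ for every $E$, so the new g-pair is NQC klt. By continuity, the inequality $(K_X + (1-\epsilon)(B+M)) \cdot R < 0$ persists for $\epsilon$ sufficiently small; combined with $\rho(X/Y) = 1$, this gives that $-(K_X + (1-\epsilon)(B+M))$ is $g$-ample. From this point the remainder of the proof of Lemma~\ref{lem:reduction_to_Q-pairs}(ii)---perturb by an ample $\R$-divisor to preserve negativity on $R$ and klt-ness, apply \cite[Lemma 3.4]{HanLi22} to produce a klt $\R$-pair $(X,\Gamma)$, decompose it into klt $\Q$-pairs via \cite[Theorem 1.4]{Chen23}, and conclude via Kawamata--Fujino---applies verbatim.

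For (i), no klt hypothesis on $(X,0)$ is available, so the above perturbation is not possible. Instead, I reduce to $\Q$-coefficients directly: since the lc condition cuts out a rational polytope in $\Div_\R(X)$ around $B$, I write $B = \sum_{j=1}^\ell r_j B_j$ as a finite $\R$-convex combination of effective $\Q$-divisors with each $(X, B_j)$ an lc $\Q$-pair and $K_X+B = \sum_j r_j(K_X+B_j)$. Using $\rho(X/Y)=1$ and $(K_X+B)\cdot R < 0$, an argument identical to \eqref{eq:2_reduct}--\eqref{eq:3_reduct} yields at least one index $j_0$ for which $-(K_X+B_{j_0})$ is $g$-ample. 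Then \cite[Theorem 1.12]{Fuj21b}, which provides the uniruledness of positive-dimensional fibers of a projective morphism from a variety admitting an lc $\Q$-pair structure whose adjoint class is relatively anti-ample, yields that every irreducible component of $\Exc(g)$ is uniruled.

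The main obstacle lies in (i): the uniruledness of exceptional loci of lc extremal contractions is considerably deeper than its klt counterpart, as the Miyaoka--Mori bend-and-break technique is not directly applicable at non-klt lc centers. I would therefore treat \cite[Theorem 1.12]{Fuj21b} as a black box for this step, whereas (ii) is a routine perturbation once the discrepancy identity above is in hand and Lemma~\ref{lem:reduction_to_Q-pairs}(ii) is invoked.
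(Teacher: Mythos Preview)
Your proof is correct; both parts succeed, though each takes a somewhat different route from the paper's.

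For (ii), the paper argues as in the first paragraph of Lemma~\ref{lem:reduction_to_Q-pairs}(ii): add a general ample $A$ so that $(K_X+B+M+A)\cdot R<0$, use the hypothesis that $(X,0)$ is klt together with \cite[Lemma 3.4]{HanLi22} to replace $B+A+M$ by a boundary $\Gamma$ with $(X,\Gamma)$ an lc (in fact klt) pair, and then invoke part~(i). Your approach instead scales the data by $(1-\epsilon)$ and uses the linear interpolation of discrepancies between $(X,B+M)$ and $(X,0)$ to obtain an NQC \emph{klt} g-pair which is still negative on $R$, after which Lemma~\ref{lem:reduction_to_Q-pairs}(ii) applies verbatim. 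Your route has the minor advantage of making (ii) logically independent of (i), so that the deeper lc input \cite[Corollary 1.3(2)]{HM07a}/\cite[Theorem 1.12]{Fuj21b} is not needed for (ii) at all.

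For (i), the paper applies \cite[Corollary 1.3(2)]{HM07a} directly to the lc pair $(X,B)$ and then uses the fiber-to-component argument from the second paragraph of the proof of Lemma~\ref{lem:exc_locus_uniruled}; no reduction to rational coefficients is performed. Your preliminary Shokurov-polytope decomposition $B=\sum r_jB_j$ into lc $\Q$-pairs, followed by the $\rho(X/Y)=1$ argument to find one $B_{j_0}$ with $-(K_X+B_{j_0})$ $g$-ample, is valid and harmless, but it is not needed: the cited results already cover $\R$-boundaries (and in any case the conclusion concerns only the geometry of the fibers of $g$, which is insensitive to the coefficients once relative anti-ampleness is in place).
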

	
	\begin{proof}~
		
		\medskip
		
		\noindent (i) The assertion follows from \cite[Corollary 1.3(2)]{HM07a} using the same argument as in the second paragraph of the proof of Lemma \ref{lem:exc_locus_uniruled}; see also \cite[Theorem 1.12]{Fuj21b}.
		
		\medskip
		
		\noindent (ii) We may argue as in the first paragraph of the proof of Lemma \ref{lem:reduction_to_Q-pairs}(ii) and then use, for example, (i) to conclude.
	\end{proof}
	
	According to Lemma \ref{lem:reduction_to_Q-pairs}(i) and its proof, a normal projective variety $X$ admits the structure of a klt g-pair if and only if it admits the structure of a klt pair, even with rational coefficients. On the other hand, the projectivized version of \cite[Example 2.1]{LX23b} shows that the analogous statement fails in the lc setting; in other words, there exist normal non-$\Q$-factorial projective varieties which admit the structure of an lc g-pair, but no structure of an lc pair. It is thus natural to ask whether Lemma \ref{lem:exc_locus_uniruled_generalizations}(ii) still holds if one drops the assumption that $(X,0)$ is klt.
	
	\medskip
	
	For the definition of (good) minimal models in the usual sense and in the sense of Birkar-Shokurov of g-pairs we refer to \cite[Subsection 2.2]{TX23a} and we also emphasize that these notions are equivalent for NQC lc g-pairs by \cite[Theorem 2.7 and Lemma 2.10]{TX23a}.
	We briefly discuss below the problem of the existence of \emph{good} minimal models for g-pairs.
	
	Example \ref{exa:dbs_vs_sbs_vs_abs} and \cite[Example 1.5]{LX23a} show that abundance fails in general for NQC lc g-pairs, even if their canonical class is nef and big or nef and log abundant. Thus, such g-pairs do not have \emph{good} minimal models in general, even though they are expected to have minimal models according to \cite[Theorem B]{TX23a}.
	
	However, the situation is considerably better for klt g-pairs. Although Example \ref{exa:uniruledness_fails_I} indicates that abundance also fails in general for NQC klt g-pairs, we prove below that NQC klt g-pairs with pseudo-effective and abundant canonical class have \emph{good} minimal models; see also \cite[\S 2.2.2]{LX23b} and \cite[Theorem 2]{Chaud23b}. We would like to thank P.\ Chaudhuri and L.\ Xie for useful relevant discussions.
	
	\begin{thm}
		\label{thm:EGMM_klt_abundant}
		If $(X,B+M)$ is an NQC klt g-pair such that $K_X+B+M$ is pseudo-effective and abundant, then $(X,B+M)$ has a good minimal model.
	\end{thm}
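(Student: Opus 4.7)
My plan is to reduce the statement to the analogous result for klt pairs with $\R$-coefficients via the standard \enquote{g-pair to pair} reduction, and then to invoke known existence results for good minimal models of such klt pairs.

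First, I would invoke \cite[Lemma 3.4]{HanLi22}---the very tool used in the proof of Lemma \ref{lem:reduction_to_Q-pairs}(ii)---to produce an effective $\R$-divisor $\Gamma$ on $X$ such that $(X,\Gamma)$ is a klt pair and $K_X+\Gamma \sim_\R K_X+B+M$. Since pseudo-effectivity and abundance of an $\R$-divisor depend only on its $\R$-linear equivalence class, $K_X+\Gamma$ is again pseudo-effective and abundant.

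Next, I would apply an existence theorem for good minimal models of klt pairs whose canonical class is pseudo-effective and abundant, as provided by \cite[Theorem 2]{Chaud23b} and the results surveyed in \cite[\S 2.2.2]{LX23b}. This yields a birational contraction $\varphi \colon X \dashrightarrow X'$ such that, setting $\Gamma' \coloneqq \varphi_*\Gamma$, the pair $(X',\Gamma')$ is klt and $K_{X'}+\Gamma'$ is nef and semi-ample.

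The last step is to transfer this conclusion back to the g-pair. Since $K_X+\Gamma \equiv K_X+B+M$, the $(K_X+\Gamma)$-negative extremal rays coincide with the $(K_X+B+M)$-negative ones, and each divisorial contraction or flip appearing in the MMP producing $\varphi$ is simultaneously a valid step of a $(K_X+B+M)$-MMP for the NQC klt g-pair (invoking the Cone theorem, Contraction theorem, and existence of flips for NQC g-pairs recalled in Subsection \ref{subsection:g-pairs}). Hence, setting $B' \coloneqq \varphi_*B$ and transporting the nef auxiliary data along the same sequence of birational modifications, $(X',B'+M')$ is again an NQC klt g-pair, and pushing forward the $\R$-linear equivalence yields $K_{X'}+B'+M' \sim_\R K_{X'}+\Gamma'$, so that $K_{X'}+B'+M'$ is nef and semi-ample; thus $(X',B'+M')$ is a good minimal model of $(X,B+M)$. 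The main obstacle I anticipate is precisely this transfer step: one must verify carefully that the MMP chosen for the klt pair $(X,\Gamma)$ can be reinterpreted step by step as an MMP for the g-pair $(X,B+M)$, and that the g-pair data---in particular the nef auxiliary divisor and the NQC condition---propagate correctly throughout. This relies on the invariance of MMP operations under $\R$-linear equivalence of the adjoint class, combined with the foundational MMP theory for NQC klt g-pairs recalled earlier.
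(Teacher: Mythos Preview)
Your first step contains a genuine gap. The reduction via \cite[Lemma 3.4]{HanLi22} that you invoke does \emph{not} produce a klt pair $(X,\Gamma)$ with $K_X+\Gamma \sim_\R K_X+B+M$ for an arbitrary NQC klt g-pair. Look again at how the paper uses that lemma in the proof of Lemma \ref{lem:reduction_to_Q-pairs}(ii): an ample divisor $A$ is added \emph{first}, and only then does one obtain $\Gamma$ with $K_X+\Gamma \sim_\R K_X+B+A+M$. The ampleness is what allows the nef data $M_W$ upstairs to be absorbed into an honest boundary on $X$; without it, $M=f_*M_W$ need not be nef on $X$ and there is no reason for an effective $\Gamma \sim_\R B+M$ with $(X,\Gamma)$ klt to exist. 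Indeed, if such a reduction were always available, the MMP for NQC klt g-pairs would collapse to the MMP for klt pairs, and the paper would have no need to give the argument it does.

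For comparison, the paper's own proof avoids this issue entirely by working directly with the g-pair: it passes to a log resolution $Y$ that also resolves the Iitaka fibration of $K_X+B+M$, shows that the log smooth klt g-pair $(Y,B_Y+M_Y)$ has $\kappa_\iota=\kappa_\sigma=0$ relatively over the Iitaka base, applies \cite[Lemma 3.10]{Hash22a} to obtain a good minimal model of $(Y,B_Y+M_Y)$, and then descends to $(X,B+M)$ via \cite[Theorem 5.18(ii)]{TX23a}. None of these steps requires converting the g-pair into a pair.

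Your later steps (transferring an MMP back to the g-pair) would be fine if the first step held, since the MMP depends only on the $\R$-linear class of the adjoint divisor; but as written the argument does not get off the ground.
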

	
	\begin{proof}
		Since $K_X+B+M$ is pseudo-effective and abundant, we have
		\begin{equation}\label{eq:1_EGMM_klt_abundant}
			\kappa_\iota(X,K_X+B+M) = \kappa_\sigma(X,K_X+B+M) \geq 0 .
		\end{equation}
		In particular, there exists an effective $\R$-divisor $D$ on $X$ such that $K_X+B+M \sim_\R D$. Consider the Iitaka fibration $\varphi \colon X \dashrightarrow V$ associated with $D$ and note that 
		\begin{align}\label{eq:2_EGMM_klt_abundant}
			\dim V &= \kappa_\sigma(X,D) = \kappa(X,D) \\
			&= \kappa_\iota(X,K_X+B+M) = \kappa_\sigma(X,K_X+B+M) , \notag
		\end{align}
		taking also \cite[Lemma 2.3(5)]{LX23a} into account. Pick a sufficiently high log resolution $f \colon Y \to X$ of $(X,B+M)$, which resolves the indeterminacies of $\varphi$, and denote by $\pi $ the induced morphism $ Y \to V $. Since $(X,B+M)$ is klt, we may write
		\begin{equation}\label{eq:3_EGMM_klt_abundant}
			K_Y + B_Y + M_Y \sim_\R f^*(K_X+B+M) + E ,
		\end{equation}
		where $B_Y$ is an effective $\R$-divisor on $Y$ with coefficients $<1$ such that $f_* B_Y = B$, and $E$ is an effective $f$-exceptional $\R$-divisor on $Y$ which has no common components with $B_Y$. In particular, $(Y,B_Y+M_Y)$ is a log smooth klt g-pair, and it follows from \eqref{eq:1_EGMM_klt_abundant}, \eqref{eq:2_EGMM_klt_abundant}, \eqref{eq:3_EGMM_klt_abundant} and \cite[Lemma 2.3(3)]{LX23a} that
		\[ \kappa_\iota(Y,K_Y+B_Y+M_Y) = \kappa_\sigma(Y,K_Y+B_Y+M_Y) = \dim V . \]
		
		Now, since $\varphi$ is the Iitaka fibration associated with $D$, there exist an ample $\R$-divisor $A$ on $V$ and an effective $\R$-divisor $F$ on $Y$ such that $f^*D \sim_\R \pi^*A + F$, so \eqref{eq:3_EGMM_klt_abundant} yields $K_Y+B_Y+M_Y \sim_{\R,\pi} E + F \geq 0 $, and hence
		$ \kappa_\iota(Y/V, K_Y+B_Y+M_Y) \geq 0 . $
		On the other hand, by \cite[Lemma 2.14]{Hash22a} we obtain
		$ \kappa_\sigma(Y/V, K_Y+B_Y+M_Y) = 0 . $
		Therefore, taking \cite[Definitions 2.11 and 2.12]{Hash22a} and \cite[Proposition V.2.7(2)]{Nak04} into account, by replacing $\pi \colon Y \to V$ with its Stein factorization, we may assume that $\pi$ is a fibration and we also infer that
		\[ \kappa_\iota(Y/V, K_Y+B_Y+M_Y) = \kappa_\sigma(Y/V, K_Y+B_Y+M_Y) = 0 . \]
		
		In conclusion, we may apply \cite[Lemma 3.10]{Hash22a} to deduce that $(Y,B_Y+M_Y)$ has a good minimal model in the sense of Birkar-Shokurov, and thus a good minimal model by \cite[Theorem A and Lemma 2.10]{TX23a}. It follows now from \eqref{eq:3_EGMM_klt_abundant}, (the $\R$-divisor version of) \cite[Lemma 2.4]{LP20a} and \cite[Theorem 5.18(ii)]{TX23a} that $(X,B+M)$ has a good minimal model, as asserted.
	\end{proof}

	\subsection{Uniruled varieties}
	
	For the definition and the basic properties of uniruled varieties we refer to \cite{Kol96}. We frequently use below the fact that uniruledness is a birationally invariant property.
	
	The following two lemmata furnish sufficient conditions for the uniruledness of the underlying variety of a g-pair. Lemma \ref{lem:uniruled_nonpsef_can} generalizes an argument extracted from the first paragraph of the proof of \cite[Theorem A]{BBP13}, while Lemma \ref{lem:uniruled_psef_antican} extends \cite[Lemma 3.18]{LMPTX23} to the setting of g-pairs.
	
	\begin{lem}
		\label{lem:uniruled_nonpsef_can}
		If $(X,B+M)$ is an lc g-pair such that $K_X+B+M$ is not pseudo-effective, then $X$ is uniruled.
	\end{lem}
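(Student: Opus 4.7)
The plan is to reduce to the smooth case via a log resolution and then invoke the classical theorem of Boucksom, Demailly, P\u{a}un and Peternell (\textbf{BDPP}), according to which a smooth projective variety is uniruled if and only if its canonical class is not pseudo-effective. Since uniruledness is a birational invariant, it suffices to produce a smooth projective birational model of $X$ whose canonical class fails to be pseudo-effective.

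To that end, I would choose a log resolution $\pi \colon Y \to X$ of $(X,B+M)$ which factors through the birational model $W$ appearing in the data of the g-pair; this is allowed by the freedom in the choice of the data. The nef part then descends to a nef $\R$-divisor $M_Y$ on $Y$ with $\pi_* M_Y = M$, and we may write
\[ K_Y + B_Y + M_Y \sim_\R \pi^*(K_X+B+M), \]
so that $\pi_* B_Y = B$ and every coefficient of $B_Y$ is at most $1$ by the lc assumption. Decomposing $B_Y = B_Y^+ - B_Y^-$ into its positive and negative parts, $B_Y^+$ is effective with coefficients in $[0,1]$, while $B_Y^-$ is effective and $\pi$-exceptional, since any non-$\pi$-exceptional prime divisor $D$ on $Y$ satisfies $\mult_D B_Y = \mult_{\pi(D)} B \geq 0$.

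Suppose for contradiction that $K_Y$ is pseudo-effective. Then the $\R$-divisor $K_Y + B_Y^+ + M_Y$ is the sum of a pseudo-effective, an effective and a nef class, hence is itself pseudo-effective. Applying the proper pushforward $\pi_*$, which sends the pseudo-effective cone of $Y$ into that of $X$, one obtains that
\[ K_X + B + M = \pi_*(K_Y + B_Y^+ + M_Y) \]
is pseudo-effective, contradicting the hypothesis. Therefore $K_Y$ is not pseudo-effective, BDPP gives that $Y$ is uniruled, and the birational invariance of uniruledness yields that $X$ is uniruled.

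The argument presents no serious obstacle; it is essentially a discrepancy computation wrapped around BDPP. The only point worth a brief check is that $\pi_*$ preserves pseudo-effectiveness, which is standard: if $D$ is pseudo-effective on $Y$ and $A$ is ample on $X$, then $D+\varepsilon\pi^*A$ is big for every $\varepsilon>0$ and hence numerically equivalent to an effective divisor, so $\pi_*D+\varepsilon A$ is pseudo-effective for every $\varepsilon>0$, and thus $\pi_* D$ is pseudo-effective.
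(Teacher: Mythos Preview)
Your proof is correct and essentially identical to the paper's: both pass to a log resolution, observe that the non-pseudo-effectivity of $K_X+B+M$ forces the canonical class of the resolution to be non-pseudo-effective (since the boundary part is effective and the moduli part is nef), invoke BDPP, and conclude by the birational invariance of uniruledness. Your $B_Y^+$ and $B_Y^-$ are exactly the paper's $\Gamma$ and $E$, and the extra paragraph justifying that $\pi_*$ preserves pseudo-effectivity is a welcome detail the paper leaves implicit.
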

	
	\begin{proof}
		Suppose that the given g-pair $(X,B+M)$ comes with data $ f \colon W \to X $ and $M_W$. We may assume that $f$ is a log resolution of $(X,B)$ and we may write
		\[ K_W + \Gamma + M_W = f^* (K_X+B+M) + E , \]
		where $\Gamma$ is an effective $\R$-divisor on $W$ with $f_* \Gamma = B$, $E$ is an effective $f$-exceptional $\R$-divisor on $W$, and the $\R$-divisors $\Gamma$ and $E$ have no common components. Then $K_W+\Gamma+M_W$ is not pseudo-effective, since otherwise $ K_X+B+M = f_*(K_W+\Gamma+M_W) $ is also pseudo-effective, a contradiction. Since $\Gamma \geq 0$ and $M_W$ is nef, we deduce that $K_W$ is not pseudo-effective. Therefore, $Y$ is uniruled by \cite[Corollary 0.3]{BDPP13}, and hence $X$ is uniruled.
	\end{proof}
	
	\begin{lem}
		\label{lem:uniruled_psef_antican}
		Let $(X,B+M)$ be a g-pair such that $-(K_X+B+M)$ is pseudo-effective. Assume that $M$ is $\R$-Cartier. Then $X$ is not uniruled if and only if $X$ is canonical with $K_X \sim_\Q 0$, $B=0$ and $M \equiv 0$.
	\end{lem}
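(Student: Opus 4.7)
The \emph{if} direction is immediate from BDPP: on any log resolution $f\colon Y\to X$, canonicity gives $K_Y=f^*K_X+E$ with $E$ effective and $f$-exceptional, so $K_X\sim_\Q 0$ forces $K_Y\sim_\Q E$ to be pseudo-effective, whence $Y$ (and thus $X$) is not uniruled.

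For the \emph{only if} direction, fix a log resolution $f\colon W\to X$ of $(X,B)$ on which the data $M_W$ of the g-pair is defined. Since uniruledness is a birational invariant, $W$ is not uniruled, and BDPP gives that $K_W$ is pseudo-effective. Writing $B_W=f_*^{-1}B+\Delta$ with $\Delta$ an $f$-exceptional $\R$-divisor, the defining relation $K_W+B_W+M_W\equiv f^*(K_X+B+M)$ rearranges to
\[ -B_W-M_W \;\equiv\; K_W+\bigl(-f^*(K_X+B+M)\bigr), \]
whose right-hand side is a sum of two pseudo-effective classes. Pushing forward by $f$ (using $f_*\Delta=0$ and $f_*M_W=M$) shows that $-(B+M)$ is pseudo-effective on $X$. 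On the other hand $B\geq 0$ is effective, and $M=f_*M_W$ is pseudo-effective as the pushforward of a nef divisor, so $B+M$ is pseudo-effective as well, giving $B+M\equiv 0$. Capping with $A^{\dim X-1}$ for $A$ ample on $X$ now forces $B\cdot A^{\dim X-1}=0$, hence $B=0$ and subsequently $M\equiv 0$.

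At this point the hypothesis has collapsed to: $-K_X$ is pseudo-effective, and $K_X$ is $\R$-Cartier (because so are $K_X+B+M$ and $M$). Using the discrepancy formula $K_W=f^*K_X+\sum a(E_i,X,0)E_i$, the $\R$-divisor $\sum a(E_i,X,0)E_i = K_W+(-f^*K_X)$ is pseudo-effective and $f$-exceptional. By the negativity lemma, every pseudo-effective $f$-exceptional $\R$-divisor on $W$ is effective; hence all discrepancies $a(E_i,X,0)$ are non-negative, and $X$ is canonical. In particular $K_X$ is $\Q$-Cartier, and pushing $K_W$ forward shows that $K_X=f_*K_W$ is pseudo-effective, which combined with $-K_X$ pseudo-effective yields $K_X\equiv 0$. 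Since $(X,0)$ is klt, the abundance theorem for klt pairs with numerically trivial canonical class (Nakayama--Kawamata--Gongyo) upgrades this to $K_X\sim_\Q 0$. The main obstacle is the negativity step; I would establish it via the Nakayama--Zariski decomposition on $W$, using that any effective $\R$-divisor numerically equivalent to an $f$-exceptional class is itself $f$-exceptional (by pushing forward and using that an effective numerically trivial divisor on a projective variety vanishes) and then invoking the negative-definiteness of the exceptional intersection form to force the movable part of the decomposition to be zero.
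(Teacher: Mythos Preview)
Your argument is correct in outline, though it takes a genuinely different path from the paper. The paper's proof is essentially a two-line reduction: since $M$ is $\R$-Cartier it is pseudo-effective, hence $-(K_X+B)=-(K_X+B+M)+M$ is pseudo-effective, so $(X,B)$ is a pair with pseudo-effective anticanonical class, and then \cite[Lemma~3.18]{LMPTX23} gives the equivalence directly (the conclusion $M\equiv 0$ falls out immediately afterwards). Your approach instead unpacks the content of that external lemma from first principles: BDPP on a resolution to get $K_W$ pseudo-effective, a pushforward argument to force $B=0$ and $M\equiv 0$, the ``pseudo-effective $f$-exceptional implies effective'' step to obtain canonical singularities, and finally abundance for numerically trivial klt canonical class. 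What you gain is a self-contained proof that does not rely on a black-box reference; what the paper gains is brevity and a clean separation of the g-pair layer from the underlying pair statement.

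Two minor points of imprecision are worth flagging. First, what you call ``the negativity lemma'' is not quite the classical statement (which needs $-D$ relatively nef); the fact you actually need is that a pseudo-effective $f$-exceptional $\R$-divisor on a smooth variety is effective, equivalently $N_\sigma(D)=D$. Your sketch via the Nakayama--Zariski decomposition is on the right track, but the phrase ``negative-definiteness of the exceptional intersection form'' is really a surface/relative notion; a cleaner route is to use \cite[Lemma~2.4]{LP20a} (cf.\ Remark~\ref{rem:NZD_properties}(3) in this paper) applied to $f^*0+E^{\pm}$ together with sublinearity of $\sigma_\Gamma$. Second, the assertion ``in particular $K_X$ is $\Q$-Cartier'' is premature where you place it: at that stage you only know $K_X$ is $\R$-Cartier with non-negative discrepancies. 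The $\Q$-Cartier conclusion is legitimate only \emph{after} you invoke abundance to get $K_X\sim_\R 0$ and then use that $K_X$ is an integral Weil divisor to upgrade this to $K_X\sim_\Q 0$. Reordering those two sentences fixes the logic.
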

	
	\begin{proof}
		Since $M$ is $\R$-Cartier, it is pseudo-effective, and since $-(K_X+B+M)$ is also pseudo-effective by assumption, it is obvious that $(X,B)$ is a pair such that $-(K_X+B)$ is pseudo-effective. Therefore, by \cite[Lemma 3.18]{LMPTX23}, $X$ is not uniruled if and only if $X$ is canonical with $K_X \sim_\Q 0$ and $B=0$, and now the assertion about $M$ follows readily.
	\end{proof}

	\section{Comparison of the asymptotic base loci}
	\label{section:comparison_loci}
	
	Our primary goal in this section is to extend \cite[Proposition 2.8]{BBP13} to the context of generalized pairs. To accomplish this, we follow the strategy of the proof of op.\ cit.\ and we use crucially recent results about the existence of good minimal models of generalized pairs, as well as basic properties of the Nakayama--Zariski decomposition.
	We begin with the following auxiliary result, which is essential for our objective.
	
	\begin{lem}
		\label{lem:loci_non_positive}
		Let $\varphi \colon X \dashrightarrow Y$ be a birational contraction between normal projective varieties and let $D\in\Div_{\R}(X)$ such that $\varphi$ is $D$-non-positive.
		Consider a resolution of indeterminacies $(p,q) \colon W \to X \times Y$ of $\varphi$ such that $ W $ is smooth and write
		\[ p^*D \sim_\R q^* (\varphi_*D) + E \]
		for some effective $q$-exceptional $\R$-divisor $E$ on $W$. If $\varphi_* D$ is semi-ample, then
		\[ \nnef(D) = \dbs(D) = \sbs(D) = p ( \Supp E ) . \]
	\end{lem}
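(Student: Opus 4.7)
The plan is to establish the chain $p(\Supp E) \subseteq \nnef(D) \subseteq \dbs(D) \subseteq \sbs(D) \subseteq p(\Supp E)$, since the middle two inclusions are already known: $\nnef(D) \subseteq \dbs(D)$ by Remark~\ref{rem:nnef_properties}(4) and $\dbs(D) \subseteq \sbs(D)$ from the inclusions recorded in the introduction. Note first that $p^*D$ is pseudo-effective, since $p^*D \sim_\R q^*(\varphi_*D) + E$ is a sum of a nef $\R$-divisor (the pullback of the semi-ample, hence nef, divisor $\varphi_*D$) and an effective one.

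For the inclusion $\sbs(D) \subseteq p(\Supp E)$, observe that $q^*(\varphi_*D)$ is semi-ample as the pullback of a semi-ample $\R$-divisor, so Remark~\ref{rem:sbs_properties}(4) gives $\sbs\bigl(q^*(\varphi_*D)\bigr) = \emptyset$. Combining this with Remark~\ref{rem:sbs_properties}(3) and the identity $\sbs(E) = \Supp E$ from Remark~\ref{rem:NZD_properties}(3) yields
\[
\sbs(p^*D) \;\subseteq\; \sbs\bigl(q^*(\varphi_*D)\bigr) \cup \sbs(E) \;=\; \Supp E.
\]
Since $\sbs(p^*D) = p^{-1}\bigl(\sbs(D)\bigr)$ by Remark~\ref{rem:sbs_properties}(5) and $p$ is surjective, applying $p$ gives $\sbs(D) \subseteq p(\Supp E)$.

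For the inclusion $p(\Supp E) \subseteq \nnef(D)$, I would apply Remark~\ref{rem:NZD_properties}(3) with the birational morphism $q$ and the pseudo-effective divisor $\varphi_*D$ on $Y$ to compute
\[
N_\sigma(p^*D) \;=\; N_\sigma\bigl(q^*(\varphi_*D) + E\bigr) \;=\; N_\sigma\bigl(q^*(\varphi_*D)\bigr) + E \;=\; E,
\]
where the last equality uses that $q^*(\varphi_*D)$ is nef, so its negative part vanishes. Thus, for every prime divisor $\Gamma \subseteq \Supp E$, the divisorial valuation $v = \mult_\Gamma$ satisfies $v\bigl(\|p^*D\|\bigr) = \mult_\Gamma E > 0$, and hence $v\bigl(\|D\|\bigr) > 0$ by the birational invariance of asymptotic vanishing orders (Remark~\ref{rem:avo_properties}(2)). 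Since $c_X(v) = p(\Gamma)$, Definition~\ref{dfn:nnef} yields $p(\Gamma) \subseteq \nnef(D)$, and therefore $p(\Supp E) \subseteq \nnef(D)$.

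The crucial input is Remark~\ref{rem:NZD_properties}(3), which forces $E$ to coincide with the negative part $N_\sigma(p^*D)$; without the $q$-exceptionality of $E$, one would only obtain $N_\sigma(p^*D) \leq E$, and the equality $\nnef(D) = p(\Supp E)$ could fail. All remaining steps are formal manipulations of the properties of asymptotic base loci and asymptotic vanishing orders listed in the preliminaries, so I anticipate no further obstacle.
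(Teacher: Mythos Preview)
Your proposal is correct and follows essentially the same route as the paper: both arguments establish $\sbs(p^*D)\subseteq\Supp E$ via Remarks~\ref{rem:sbs_properties}(3)(4) and~\ref{rem:NZD_properties}(3), and both compute $N_\sigma(p^*D)=E$ via Remark~\ref{rem:NZD_properties}(2)(3) to force the reverse inclusion. The only cosmetic difference is that the paper proves all four loci coincide on $W$ (invoking \cite[Lemma V.1.9(1)]{Nak04} for $\nnef(p^*D)=\dbs(p^*D)$) and then pushes forward via Remarks~\ref{rem:sbs_properties}(5) and~\ref{rem:nnef_properties}(5), whereas you push forward $\sbs$ and handle $\nnef$ directly through Remark~\ref{rem:avo_properties}(2); both are equally valid.
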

	
	\begin{proof}
		By Remarks \ref{rem:sbs_properties}(3)(4) and \ref{rem:NZD_properties}(3) we deduce that 
		\begin{equation}\label{eq:1_loci_nonpos}
			\sbs (p^* D) = \sbs \big( q^* (\varphi_*D) + E \big) \subseteq \sbs(E) = \Supp E .
		\end{equation}
		By Remark \ref{rem:NZD_properties}(2)(3) we infer that 
		\begin{equation}\label{eq:2_loci_nonpos}
			N_\sigma ( p^* D ) = N_\sigma \big( q^*(\varphi_* D) + E \big) = E .
		\end{equation}
		Since Remark \ref{rem:NZD_properties}(1) implies that 
		$ \Supp \big(N_\sigma ( p^* D )\big) \subseteq \dbs(p^* D) $, 
		by \eqref{eq:1_loci_nonpos} and \eqref{eq:2_loci_nonpos} we have $ \Supp E \subseteq \dbs ( p^* D ) \subseteq \sbs ( p^* D ) \subseteq \Supp E $, and thus, together with \cite[Lemma V.1.9(1)]{Nak04}, we obtain
		\begin{equation}\label{eq:3_loci_nonpos}
			\nnef( p^* D ) =  \dbs ( p^* D ) = \sbs ( p^* D ) = \Supp E .
		\end{equation}
		Therefore, by Remark \ref{rem:sbs_properties}(5), respectively Remark \ref{rem:nnef_properties}(5), and \eqref{eq:3_loci_nonpos} we deduce that
		\[ \sbs(D) = p \big( \sbs(p^*D) \big) = p (\Supp E) \]
		and
		\[ \nnef(D) = p \big( \nnef(p^*D) \big) = p (\Supp E) , \]
		which yield the statement.
	\end{proof}
	
	As an application of Lemma \ref{lem:loci_non_positive} we can show that if $(X,B+M)$ is a g-pair such that $-(K_X+B+M)$ is pseudo-effective and if $X$ is a Mori dream space (e.g., a variety of Fano type), then $\nnef \big( {-}(K_X+B+M) \big)$ is Zariski-closed, cf.\ \cite[Theorem 1.1]{Les14}. Indeed, since $X$ is a Mori dream space, we may run a ${-}(K_X+B+M)$-MMP which terminates with a ${-}(K_X+B+M)$-good minimal model, and thus
	\[ \nnef \big( {-}(K_X+B+M) \big) = \dbs \big( {-}(K_X+B+M) \big) = \sbs \big( {-}(K_X+B+M) \big) \]
	by Lemma \ref{lem:loci_non_positive}, which proves the above assertion. This provides a partial answer to a question raised in \cite[p.\ 155]{CP16}.
	
	In the remainder of this section, Lemma \ref{lem:loci_non_positive} will be applied exclusively to the adjoint divisor of a generalized pair as follows: by Lemma \ref{lem:loci_non_positive} and by \cite[Remark 2.6]{LT22b} we conclude that if $(X,B+M)$ is an lc g-pair such that $K_X+B+M$ is pseudo-effective and if $(X,B+M)$ has a good minimal model, then
	\[ \nnef(K_X+B+M) = \dbs(K_X+B+M) = \sbs (K_X+B+M) . \]
	In particular, the diminished base locus of $K_X+B+M$ is Zariski-closed under these assumptions, cf.\ \cite[Theorem 1.1]{Les14}.
	
	The next theorem is the desired analogue of \cite[Proposition 2.8]{BBP13} in the setting of generalized pairs. Part (iii) is an extension of op.\ cit.\ to the category of generalized pairs, while part (iv) complements part (iii) and is new even for usual pairs. Parts (i) and (ii) constitute generalizations of op.\ cit.\ to the lc setting and are also new for usual pairs.
	
	\begin{thm}
		\label{thm:loci_comparison_can}
		The following statements hold:
		\begin{enumerate}[\normalfont (i)]
			\item If $(X,B+M)$ is an NQC lc g-pair such that $K_X+B+M$ is pseudo-effective, then 
			\[ \nnef(K_X+B+M) = \dbs(K_X+B+M) . \]
			
			\item If $ \big( X,(B+A)+M \big) $ is an NQC lc g-pair such that $K_X+B+A+M$ is pseudo-effective, where $ A $ is an effective ample $\R$-divisor on $X$, then 
			\[ \nnef(K_X+B+A+M) = \dbs(K_X+B+A+M) = \sbs(K_X+B+A+M) . \]
			
			\item If $(X,B+M)$ is a klt g-pair such that $K_X+B+M$ is big, 
			then 
			\[ \nnef(K_X+B+M) = \dbs(K_X+B+M) = \sbs(K_X+B+M) . \]
			
			\item If $(X,B+M)$ is a klt g-pair such that $K_X+B+M$ is pseudo-effective and if $B$ or $M$ is big, then 
			\[ \nnef(K_X+B+M) = \dbs(K_X+B+M) = \sbs(K_X+B+M) . \]
		\end{enumerate}
	\end{thm}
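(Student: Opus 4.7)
The strategy for all four parts is uniform: by Lemma~\ref{lem:loci_non_positive}, in order to conclude that the three loci agree for the adjoint divisor $K_X+B+M$, it is enough to produce a $(K_X+B+M)$-non-positive birational contraction whose pushforward of $K_X+B+M$ is semi-ample, i.e., a good minimal model of the given g-pair. The main input I would invoke is the existence of good minimal models for NQC lc g-pairs with pseudo-effective adjoint whose generalized boundary is big; this generalization of BCHM to the g-pair setting is available in \cite{Hash22a,LX23b,TX23a}.

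Parts (ii), (iii), (iv) are then dispatched by fitting each hypothesis into this framework. For (ii), the divisor $A$ is ample and lies in the boundary, so $B+A+M$ is big, the good minimal model exists, and Lemma~\ref{lem:loci_non_positive} yields the claim directly. For (iii), Kodaira's lemma gives $K_X+B+M \sim_\R A'+E'$ with $A'$ ample and $E'\geq 0$; for $\varepsilon>0$ sufficiently small I would set $B_\varepsilon := B+\varepsilon E'$ and $M_\varepsilon := M+\varepsilon A'$, producing an NQC klt g-pair $(X, B_\varepsilon + M_\varepsilon)$ whose nef part $M_\varepsilon$ is ample and whose adjoint satisfies
\[
K_X+B_\varepsilon+M_\varepsilon \sim_\R (1+\varepsilon)(K_X+B+M),
\]
so the cited EGMM applies, Lemma~\ref{lem:loci_non_positive} gives the equality of the three loci for $K_X+B_\varepsilon+M_\varepsilon$, and the invariance of these loci under positive scaling and under $\R$-linear (for $\sbs$) or numerical (for $\nnef$, $\dbs$) equivalence transfers the conclusion to $K_X+B+M$. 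For (iv), if $M$ is big I would use Kodaira's lemma on $M$ to obtain $M\sim_\R A'+E'$ and perturb to $B' := B+\varepsilon E'$, $M' := (1-\varepsilon)M+\varepsilon A'$ for small $\varepsilon>0$; then $(X,B'+M')$ is NQC klt with $M'$ ample and $K_X+B'+M' \sim_\R K_X+B+M$ pseudo-effective, so the same argument concludes. The case of $B$ big is symmetric, moving an ample summand of $B$ into the nef part.

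Part (i) I would then deduce from (ii) by a standard limiting argument. The inclusion $\nnef(K_X+B+M)\subseteq\dbs(K_X+B+M)$ is Remark~\ref{rem:nnef_properties}(4). For the reverse, fix any ample $\R$-divisor $A$ on $X$; then $\big(X,(B+A)+M\big)$ is an NQC lc g-pair with pseudo-effective adjoint, so by (ii)
\[
\sbs(K_X+B+A+M)=\nnef(K_X+B+A+M).
\]
Combining the subadditivity in Remark~\ref{rem:avo_properties}(3) with the vanishing $v\big(\|A\|\big)=0$ for ample $A$ (a consequence of Remark~\ref{rem:nnef_properties}(3)) gives $v\big(\|K_X+B+A+M\|\big) \leq v\big(\|K_X+B+M\|\big)$ for every divisorial valuation $v$, whence $\nnef(K_X+B+A+M)\subseteq\nnef(K_X+B+M)$. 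Taking the union over all ample $A$ and using the definition $\dbs(K_X+B+M)=\bigcup_A\sbs(K_X+B+A+M)$ yields $\dbs(K_X+B+M)\subseteq\nnef(K_X+B+M)$, completing the proof.

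The main obstacle is identifying and invoking the correct existence-of-good-minimal-models statement in each case; in particular, the NQC lc variant needed for (ii), and hence for the reduction of (i) to (ii), is the most delicate input, but is by now standard in the generalized-pair MMP literature. Granted this, parts (ii)--(iv) are immediate applications of Lemma~\ref{lem:loci_non_positive}, and (i) reduces to (ii) by the perturbation above.
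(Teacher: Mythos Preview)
Your strategy coincides with the paper's: in each case one invokes an existence-of-good-minimal-models result and then applies Lemma~\ref{lem:loci_non_positive}. The paper simply cites \cite[Theorem~F]{TX23a} for (ii), \cite[Theorem~2.12(ii)]{TX23a} for (iii), and \cite[Lemma~4.2(ii)]{HanLiu20} for (iv) directly, rather than performing your Kodaira-type perturbations, but your reductions are valid and lead to the same conclusion.

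There is one small gap in your argument for (i): it is \emph{not} true that $(X,(B+A)+M)$ is lc for an arbitrary effective ample $\R$-divisor $A$ (for instance $A$ could share a component with $B$ and push a coefficient past $1$). Before invoking (ii) you must replace $A$ by a general member of $|A|_\R$; this leaves $\sbs(K_X+B+M+A)$ unchanged since $\sbs$ is invariant under $\R$-linear equivalence, and likewise $\nnef(K_X+B+M+A)$ is unchanged by Remark~\ref{rem:nnef_properties}(1). The paper makes exactly this replacement explicitly. With that fix, your deduction of (i) from (ii) via the union $\dbs(K_X+B+M)=\bigcup_A\sbs(K_X+B+M+A)$ together with $\nnef(K_X+B+M+A)\subseteq\nnef(K_X+B+M)$ is correct, and is in fact slightly more streamlined than the paper's component-by-component phrasing.
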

	
	\begin{proof}~
		
		\medskip
		
		\noindent (i) Due to Remark \ref{rem:nnef_properties}(4) it remains to establish the inclusion
		\begin{equation}\label{eq:1_loci_comparison}
			\nnef(K_X+B+M) \supseteq \dbs(K_X+B+M) .
		\end{equation}
		To this end, by the definition of $\dbs$, if $V$ is an irreducible component of $\dbs(K_X+B+M)$, then there exists an ample $\R$-divisor $A$ on $X$ such that $V$ is an irreducible component of $\sbs(K_X+B+M+2A)$. Since again by the definition of $\dbs$ and by Remark \ref{rem:sbs_properties}(3)(4) we have
		\[ \sbs(K_X+B+M+2A) \subseteq \dbs(K_X+B+M+A) \subseteq \dbs(K_X+B+M) , \]
		we infer that $V$ is an irreducible component of $ \dbs(K_X+B+M+A) $. Furthermore, by Remark \ref{rem:avo_properties}(3), we have $\nnef(K_X+B+M+A) \subseteq \nnef(K_X+B+M) $. Consequently, to prove \eqref{eq:1_loci_comparison}, it suffices to show that
		\begin{equation}\label{eq:2_loci_comparison}
			\dbs(K_X+B+M+A) \subseteq \nnef(K_X+B+M+A) .
		\end{equation}
		Taking Remarks \ref{rem:abs+dbs_properties}(1) and \ref{rem:nnef_properties}(1) into account, by replacing $A$ with a general member of its $\R$-linear system, we may assume that $\big(X, (B+A)+M \big)$ is an NQC lc g-pair. Observe also that $K_X+B+A+M$ is big. Therefore, by \cite[Theorem F]{TX23a} and by Lemma \ref{lem:loci_non_positive}, the equality in \eqref{eq:2_loci_comparison} holds, and thus the equality in \eqref{eq:1_loci_comparison} holds, which completes the proof of (i).
		
		\medskip
		
		\noindent (ii) We conclude by \cite[Theorem F]{TX23a} and Lemma \ref{lem:loci_non_positive}.
		
		\medskip
		
		\noindent (iii) We conclude by \cite[Theorem 2.12(ii)]{TX23a} and Lemma \ref{lem:loci_non_positive}.
		
		\medskip
		
		\noindent (iv) We conclude by \cite[Lemma 4.2(ii)]{HanLiu20} and Lemma \ref{lem:loci_non_positive}.
	\end{proof}
	
	\begin{rem}
		\label{rem:loci_comparison_can}
		Let $(X,B+M)$ be a klt (resp.\ NQC lc) g-pair and let $D$ be a pseudo-effective $\R$-divisor on $X$. If there exists $\lambda > 0$ such that $-(K_X+B+M) + \lambda D$ is nef (resp.\ NQC), then setting $L \coloneqq \lambda D - (K_X+B+M) $ we obtain that $\big(X,B+(M+L) \big)$ is a klt (resp.\ NQC lc) g-pair such that $K_X+B+(M+L) = \lambda D$.
			
		In this way, we may easily derive information about the asymptotic base loci $\nnef(D)$, $\dbs(D)$ and $\sbs(D)$ using Theorem \ref{thm:loci_comparison_can}. For instance, if $(X,B+M)$ is an NQC lc g-pair and if $D$ is a pseudo-effective $\R$-divisor on $X$ such that $\lambda D - (K_X+B+M)$ is NQC for some $\lambda > 0$, then we have $ \nnef(D) = \dbs(D) $ by Theorem \ref{thm:loci_comparison_can}(i).
	\end{rem}
	
	We present below some examples to demonstrate that our results in Theorem \ref{thm:loci_comparison_can} are optimal. The first ones show that the inclusions
	\[ \dbs(K_X+B+M) \subseteq \sbs(K_X+B+M) \subseteq \abs(K_X+B+M) \]
	can be strict in the setting of Theorem \ref{thm:loci_comparison_can}(i).
	
	\begin{exa}~
		\label{exa:dbs_vs_sbs_vs_abs}
		
		\medskip
		
		\noindent (i) 
		In this step, following closely \cite[Example 3.14]{HanLiu20}, we construct a surface $X$ with an lc g-pair structure such that the diminished base locus and the stable base locus of the adjoint divisor are different, and the augmented base locus of the adjoint divisor is not the whole surface $X$.
		
		Let $B_0 \subseteq \mathbb{P}^2$ be an elliptic curve and let $\mu \colon X \to \mathbb{P}^2$ be the blow-up of $\mathbb{P}^2$ at $12$ general points $p_1,\dots,p_{12}\in B_0$. Denote by $E_i$ the exceptional prime divisor on $X$ over $p_i$, where $1 \leq i \leq 12$, and set $E \coloneqq \sum_{i=1}^{12} E_i$. Let $H$ be the pullback of a general line in $\mathbb{P}^2$ and let $B \in |3H-E|$ be the strict transform of $B_0$ on $X$. Note that $K_X + B \sim 0$ and $B \cdot E_i = 1$ for every $1 \leq i \leq 12$. By \cite[2.3.A]{Laz04} the divisor $M \coloneqq 4H-E \sim B + H$ is nef and big but not semi-ample, since $B \subseteq \sbs(M)$.
		
		We will determine here the base loci associated with $M$. By Remark \ref{rem:abs+dbs_properties}(3), we have $\dbs(M) = \emptyset$. Next, we will show that $\sbs(M) = \abs(M) = B$. To this end, by Remarks \ref{rem:abs+dbs_properties}(4) and \ref{rem:abs_alt_description}, the locus $\abs(M)$ is the union of finitely many $M$-trivial curves on $X$, one of which is $B$. Assume that there exists an irreducible curve $\ell \subseteq \abs(M)$ which is different from $B$. Then $B \cdot \ell \geq 0$, and since $0 = M \cdot \ell = (B+H) \cdot \ell$ and $H$ is nef, we conclude that $H \cdot \ell = B \cdot \ell =0$. Hence, the curve $\ell$ is contracted by $\mu$, that is, $\ell$ is one of the $E_i$, but we then have $0 = B \cdot \ell = B \cdot E_i = 1$, a contradiction. Consequently, $\abs(M) = B$, which implies that $\sbs(M) = B$.
		
		Observe that $(X,B+M)$ is an lc polarized $\Q$-pair such that $K_X+B+M \sim M$ is nef and big but not semi-ample, and by the previous paragraph we have
		\[ \emptyset = \dbs(K_X+B+M) \subsetneq \sbs(K_X+B+M) = \abs(K_X+B+M) = B \subsetneq X . \]
		
		\noindent (ii) In this step we blow up a point on the surface $X$ constructed in (i) above in order to obtain a surface $Y$ with an lc g-pair structure which further satisfies that the stable base locus and the augmented base locus of the adjoint divisor are different.
		
		Let $\pi \colon Y \to X$ be the blow-up of $X$ at a point $p \not \in B \cup \Supp E$. Denote by $F$ the exceptional divisor of $\pi$ and by $B_Y$ the strict transform of $B$ in $Y$. Set $M_Y \coloneqq \pi^*(M) - F$. We will show that $M_Y$ is nef. To this end, let $\gamma$ be an irreducible curve on $Y$.
		If $\gamma$ is contracted by $\mu\circ\pi$, then $\gamma \in \{\pi^*(E_1),\dots,\pi^*(E_{12}),F\}$, and we have $M_Y\cdot F=1$ and $M_Y \cdot \pi^*(E_i) = B \cdot E_i =1$ for any $1 \leq i \leq 12$. If $\gamma$ is not contracted by $\mu\circ\pi$, then we may write
		\[ \gamma \sim a \pi^*H - \sum_{i=1}^{12}b_i\pi^*E_i - cF, \]
		where $a, b_i, c \in \N$, $ a \geq c$ and $a \geq b_i$ for any $1 \leq i\leq 12$. We compute
		\[ M_Y \cdot \gamma = 4a - \sum_{i=1}^{12} b_i - c . \]
		Since $ B_Y \cdot F = 0$, we have $M_Y \cdot B_Y = M \cdot B = 0$ by Remark \ref{rem:abs_alt_description}. It remains to treat the case when $\gamma \neq B_Y$. We then have $B_Y \cdot \gamma \geq 0$, which implies
		\[ 3a - \sum_{i=1}^{12}b_i \geq 0 . \]
		Hence, 
		\[ M_Y\cdot \gamma \geq  a - c \geq 0 , \]
		which completes the proof of the above assertion. 
		
		Observe that $(Y,B_Y+M_Y)$ is an lc polarized $\Q$-pair such that
		\[ K_Y+B_Y+M_Y \sim \pi^*(M) \sim \pi^* (K_X+B+M) \]
		is nef and big but not semi-ample. Therefore, by Remarks \ref{rem:sbs_properties}(5) and \ref{rem:abs+dbs_properties}(3)(5) we obtain
		\begin{align*}
			\emptyset = \dbs(K_Y+B_Y+M_Y) &\subsetneq \sbs(K_Y+B_Y+M_Y) = B_Y \\
			&\subsetneq \abs(K_Y+B_Y+M_Y) = B_Y \cup F \subsetneq Y .
		\end{align*}
		
		\noindent (iii) In this step we blow up a point on the surface $Y$ constructed in (ii) above in order to obtain a surface $Z$ with an lc g-pair structure which additionally satisfies that the diminished base locus of the adjoint divisor is non-empty.
		
		Let $\varphi \colon Z \to Y$ be the blow-up of $Y$ at some point and denote by $G$ the reduced exceptional divisor of $\varphi$. Let $C_Z$ (resp.\ $F_Z$) be the strict transform of $B_Y$ (resp.\ $F$) on $Z$ and set $B_Z \coloneqq \varphi^*B_Y$ and $M_Z \coloneqq \varphi^* M_Y$. Then $(Z,B_Z+M_Z)$ is a lc polarized $\Q$-pair such that
		\[ K_Z+B_Z+M_Z\sim \varphi^*(K_Y+B_Y+M_Y)+G \]
		is big but not nef, since $(K_Z+B_Z+M_Z)\cdot G=-1$. We will show in the next paragraph that 
		\begin{align*}
			\emptyset &\subsetneq \dbs(K_Z+B_Z+M_Z) = G \\ 
			&\subsetneq \sbs(K_Z+B_Z+M_Z) = C_Z \cup G \\ 
			&\subsetneq \abs(K_Z+B_Z+M_Z) = C_Z \cup F_Z \cup G \subsetneq Z .
		\end{align*}
		
		First, since $N_\sigma(K_Z+B_Z+M_Z)=G$ by Remark \ref{rem:NZD_properties}(2)(3), it follows from Remarks \ref{rem:abs+dbs_properties}(3)(4) and \ref{rem:NZD_properties}(1) that 
		\[ \dbs(K_Z+B_Z+M_Z) = G . \]
		Second, by Remark \ref{rem:abs+dbs_properties}(5) we obtain 
		\[ \abs(K_Z+B_Z+M_Z) = C_Z \cup F_Z \cup G . \]
		It remains to show that
		\[ \sbs(K_Z+B_Z+M_Z) = C_Z \cup G . \]
		To this end, by Remark \ref{rem:sbs_properties}(5) we obtain
		\[ C_Z \subseteq \varphi^{-1} \big( \sbs(K_Y+B_Y+M_Y) \big) = \sbs \big( \varphi^* (K_Y+B_Y+M_Y) \big).\] 
		Since $G \subseteq \sbs(K_Z+B_Z+M_Z)$, for any $m\in\N$ it follows that
		\begin{align*}
			|m(K_Z+B_Z+M_Z)| &= |m\varphi^* (K_Y+B_Y+M_Y)+mG| \\
			&= |m\varphi^* (K_Y+B_Y+M_Y)| + mG .
		\end{align*}
		Hence, taking Remark \ref{rem:sbs_properties}(1) into account, we obtain
		\[ \sbs(K_Z+B_Z+M_Z)= \sbs \big( \varphi^* (K_Y+B_Y+M_Y) \big) \cup G = C_Z \cup G , \]
		as claimed.
	\end{exa}
	
	The next example indicates that the inclusion
	\[ \sbs(K_X+B+M) \subseteq \abs(K_X+B+M) \]
	can be strict in the setting of Theorem \ref{thm:loci_comparison_can}(iii).
	
	\begin{exa}
		Let $(X,B)$ be a klt $\Q$-pair such that $-(K_X+B)$ is nef and big, but not ample. Set $M \coloneqq -2(K_X+B)$. Then $(X,B+M)$ is a klt polarized $\Q$-pair such that $K_X+B+M = -(K_X+B)$ is big and semi-ample by \cite[Theorem 3.3]{KM98}, but not ample. Therefore,
		\[ \emptyset = \dbs(K_X+B+M) = \sbs(K_X+B+M) \subsetneq \abs(K_X+B+M) \subsetneq X . \]
		We discuss in the next paragraph a concrete example in the 
		present setting.
		
		Let $X$ be a weak del Pezzo surface which is not del Pezzo, i.e., $-K_X$ is nef and big, but not ample. For example, take $X$ to be the blow-up of $\mathbb{P}^2$ at three points lying on a line. Set $B \coloneqq 0$ and $M \coloneqq -2K_X$. Then $K_X+B+M = -K_X$ is big and semi-ample, but not ample. By \cite[Proposition 4]{LO16} there are finitely many $K_X$-trivial irreducible curves $C_1, \dots, C_k$ on the surface $X$, which are smooth rational curves; see \cite[Exercises IV.1.8(b) and V.1.3(a)]{Har77}. Therefore, taking Remark \ref{rem:abs_alt_description} into account, we obtain
		\begin{align*}
			\emptyset &= \dbs(K_X+B+M) = \sbs(K_X+B+M) \\ 
			&\subsetneq \abs(K_X+B+M)= C_1 \cup \ldots \cup C_k \subsetneq X .
		\end{align*}
	\end{exa}
	
	We provide in Corollaries \ref{cor:loci_comparison_arb_nef_antican}, \ref{cor:loci_comparison_arb_lc_partial} and \ref{cor:loci_comparison_abundant}(ii) some conditions under which the non-nef locus and the diminished base locus of a pseudo-effective $\R$-divisor $D$ on a normal projective variety coincide, cf.\ \cite[Theorem 1.2]{CDB13}. We emphasize that Corollary \ref{cor:loci_comparison_arb_lc_partial} establishes some new special cases of Boucksom, Broustet and Pacienza's conjecture \cite[Conjecture 2.7]{BBP13} in the lc setting.
	
	\begin{cor}
		\label{cor:loci_comparison_arb_nef_antican}
		Let $(X,B+M)$ be a klt g-pair such that $K_X+B+M \equiv 0$ and let $D\in\Div_{\R}(X)$. The following statements hold:
		\begin{enumerate}[\normalfont (i)]
			\item If $D$ is pseudo-effective, then
			\[ \nnef(D) = \dbs(D). \]
			
			\item If $D$ is big and if $K_X+B+M \sim_\R 0$, then 
			\[ \nnef(D) = \dbs(D) = \sbs(D) . \]
			
			\item Assume that $|D|_\R \neq \emptyset$. If $B$ or $M$ is big and if $K_X+B+M \sim_\R 0$, then
			\[ \nnef(D) = \dbs(D) = \sbs(D) . \]
		\end{enumerate}
	\end{cor}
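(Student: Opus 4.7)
The plan is to reduce each of (i)--(iii) to an appropriate case of Theorem \ref{thm:loci_comparison_can} by building, on the same underlying variety $X$, an auxiliary klt g-pair whose adjoint class is (numerically or $\R$-linearly) equivalent to a small positive multiple of $D$, or of a small ample perturbation of $D$. This is exactly the trick recorded in Remark \ref{rem:loci_comparison_can}.

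For (ii), Kodaira's lemma writes $D \sim_\R A + E$ with $A$ an ample and $E$ an effective $\R$-divisor, and for $\varepsilon > 0$ sufficiently small the g-pair $\big(X, (B+\varepsilon E) + (M+\varepsilon A)\big)$ is klt, with adjoint $\R$-linearly equivalent to $\varepsilon D$ thanks to $K_X+B+M \sim_\R 0$, and big. Theorem \ref{thm:loci_comparison_can}(iii) then yields equality of $\nnef$, $\dbs$ and $\sbs$ of this adjoint, which transfers to the loci of $D$ via the $\R$-linear invariance of $\sbs$ and the numerical invariance of $\nnef$ and $\dbs$ (Remarks \ref{rem:abs+dbs_properties}(1)(2) and \ref{rem:nnef_properties}(1)(2)). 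For (iii), writing $D \sim_\R F$ with $F \geq 0$ (using $|D|_\R \neq \emptyset$), the g-pair $\big(X, (B + \varepsilon F) + M\big)$ is klt for small $\varepsilon$ with adjoint $\sim_\R \varepsilon D$, and one of $B + \varepsilon F$, $M$ remains big depending on which was assumed; Theorem \ref{thm:loci_comparison_can}(iv) and the same transfer argument conclude.

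Part (i) is more delicate because $K_X+B+M \equiv 0$ is only numerical, so the adjoint of any auxiliary g-pair will be merely numerically equivalent to the intended class, and $\sbs$ is out of reach. The inclusion $\nnef(D) \subseteq \dbs(D)$ is Remark \ref{rem:nnef_properties}(4), so I would only need $\dbs(D) \subseteq \nnef(D)$, which I would argue pointwise: given $x \in \dbs(D)$, by definition there exists an ample $\R$-divisor $A$ with $x \in \sbs(D+A)$. Since $D + A/2$ is big, Kodaira gives $D + A/2 \sim_\R A' + E$, and the klt g-pair $\big(X, (B + \varepsilon E) + (M + \varepsilon A')\big)$ has adjoint $\equiv \varepsilon(D + A/2)$, which is big. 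Theorem \ref{thm:loci_comparison_can}(iii) combined with the numerical invariance of $\nnef$ and $\dbs$ thus gives $\nnef(D + A/2) = \dbs(D + A/2)$. The definitional inclusion $\sbs(D + A) \subseteq \dbs(D + A/2)$ (take the ample $A/2$ in the union defining $\dbs$) and the monotonicity $\nnef(D + A/2) \subseteq \nnef(D)$, which follows from $v\big(\|A/2\|\big) = 0$ for the nef $\R$-divisor $A/2$ by Remark \ref{rem:nnef_properties}(3) together with the subadditivity of Remark \ref{rem:avo_properties}(3), then yield the chain $x \in \sbs(D+A) \subseteq \dbs(D+A/2) = \nnef(D+A/2) \subseteq \nnef(D)$.

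The main technical point to verify is that the perturbed g-pairs remain valid and klt: replacing the moduli data $M_W$ by $M_W + \varepsilon f^*A$ preserves nefness on $W$, and the discrepancy inequalities of a klt g-pair are stable under small perturbations of both the boundary and the moduli. I note that NQC of the original $(X,B+M)$ is never used, since Theorem \ref{thm:loci_comparison_can}(iii) and (iv) are both NQC-free.
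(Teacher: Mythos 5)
Your argument is correct and follows essentially the same route as the paper: perturb to the big case, absorb an effective representative into an auxiliary klt g-pair whose adjoint class is (numerically or $\R$-linearly) a positive multiple of the divisor in question, apply Theorem \ref{thm:loci_comparison_can}(iii)/(iv), and transfer back using the numerical invariance of $\nnef$ and $\dbs$, the monotonicity $\nnef(D+A)\subseteq\nnef(D)$, and the definitional inclusion $\sbs(D+A)\subseteq\dbs(D+A/2)$. The only cosmetic difference is that in (i) and (ii) you split the big divisor via Kodaira's lemma between the boundary and the moduli part, whereas the paper places a general member of the $\R$-linear system entirely into the boundary.
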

	
	\begin{proof}~
		
		\medskip
		
		\noindent (i) To prove the assertion, by arguing as in the proof of Theorem \ref{thm:loci_comparison_can}(i), we see that it suffices to show that $ \dbs(D+A) \subseteq \nnef(D+A) $ for some ample $\R$-divisor $A$ on $X$ chosen as in that proof. To this end, in view of Remarks \ref{rem:abs+dbs_properties}(1)(2), \ref{rem:nnef_properties}(1)(2), \cite[Remark 4.2(2)]{BZ16} and the fact that $D+A$ is big, by first replacing $D+A$ with a member of its $\R$-linear system and then with $\varepsilon (D+A)$ for some sufficiently small $\varepsilon > 0$, we may assume that $\big( X, (B+D+A) +M \big)$ is a klt g-pair such that $K_X+B+D+A+M \equiv D+A$. Hence, (i) follows from Theorem \ref{thm:loci_comparison_can}(iii) or \ref{thm:loci_comparison_can}(iv).
		
		\medskip
		
		\noindent (ii) By first replacing $D$ with a member of its $\R$-linear system and then with $\varepsilon D$ for some sufficiently small $\varepsilon > 0$, we may assume that $\big( X,(B+D)+M \big)$ is a klt g-pair such that $K_X+B+D+M \sim_\R D$, so (ii) follows from Theorem \ref{thm:loci_comparison_can}(iii), taking Remarks \ref{rem:sbs_properties}(2), \ref{rem:abs+dbs_properties}(1)(2) and \ref{rem:nnef_properties}(1)(2) into account.
		
		\medskip
		
		\noindent (iii) We argue as in the proof of (ii), except that we now invoke Theorem \ref{thm:loci_comparison_can}(iv) instead of Theorem \ref{thm:loci_comparison_can}(iii).
	\end{proof}
	
	The next examples demonstrate that the equality $\dbs(D) = \sbs(D)$ in Corollary \ref{cor:loci_comparison_arb_nef_antican}(ii) need not hold in general if we drop the assumption that $(X,B+M)$ has klt singularities (cf.\ Corollary \ref{cor:loci_comparison_arb_lc_partial}) or that $D$ is big (cf.\ Corollary \ref{cor:loci_comparison_abundant}(ii)).
	
	\begin{exa}~
		
		\medskip
		
		\noindent (i) \cite[Example 1.1]{Sho00}:
		Let $S \coloneqq \mathbb{P}(\mathcal{E}) \to C$ be a $\mathbb{P}^1$-bundle over an elliptic curve $C$, where $\mathcal{E}$ is a rank-two vector bundle over $C$ which is defined by an non-split extension 
		\[ 0 \to \mathcal{O}_C \to \mathcal{E} \to \mathcal{O}_C \to 0 . \]
		Then $-K_S$ is nef with $\kappa(S,-K_S) = 0$ and the unique member in the linear system $|{-}K_S|$ is $2C_0$, where 
		$ \mathcal{O}_S(C_0)\simeq \mathcal{O}_{\mathbb{P}(\mathcal{E})}(1) $. 
		Therefore,
		\[ \emptyset = \dbs(C_0) \subsetneq \sbs(C_0) = C_0 \subsetneq \abs(C_0) = S . \]
		Here, we view $(S,2C_0)$ either as a klt polarized $\Q$-pair with boundary part $B = 0$ and nef part $M = 2C_0$ or as an lc polarized $\Q$-pair with boundary part $B = C_0$ and nef part $M = C_0$, which satisfies $K_S+B+M \sim 0$.
		
		\medskip
		
		\noindent (ii) Consider the rank-two vector bundle $\mathcal{V}\coloneqq \mathcal{O}_S\oplus \mathcal{O}_S(-2C_0-\ell_0)$ on $S$, its projectivization $ \pi \colon X \coloneqq \mathbb{P} (\mathcal{V}) \to S $, and denote by $
		\gamma$ a fiber of $\pi$. Set 
		$ E\coloneqq \mathbb{P}\big(\mathcal{O}_S(-2C_0-\ell_0)\big)\simeq S $,
		note that 
		$\mathcal{O}_X(E)\simeq\mathcal{O}_{\mathbb{P}(\mathcal{V})}(1)$, and denote by $\ell_E$ a fibre of the ruled surface $E$ and by $C_E\subseteq E$ the zero section. We have
		\begin{equation}\label{eq:exa_can_bundle}
			\omega_X \simeq \pi^*(\omega_S\otimes\det\mathcal{V}) \otimes \mathcal{O}_{\mathbb{P}(\mathcal{V})}(-2)\simeq \pi^*\mathcal{O}_S(-4C_0-\ell_0)\otimes \mathcal{O}_X(-2E).\
		\end{equation}
		By Grothendieck's relation \cite[Appendix A, Section 3]{Har77} we obtain $E^2 = E\cdot \pi^*\big(c_1(\mathcal{V})\big)$, and thus
		\begin{equation}\label{eq:exa_Groth_rel}
			E|_E\equiv -2C_E-\ell_E.
		\end{equation}
		It is easy to check that the three extremal rays of the Mori cone of $X$ are generated by the class of $\ell_E$, the class of $C_E$ and the class of $
		\gamma$, respectively. By \eqref{eq:exa_can_bundle} and \eqref{eq:exa_Groth_rel}, we obtain
		\[ -(K_X+E) \cdot \ell_E =2, \quad -(K_X+E) \cdot C_E = 0, \quad -(K_X+E) \cdot \gamma = 1. \]
		Therefore, $-(K_X+E)$ is nef.
		
		Set $ L \coloneqq -(K_X+E)$ and consider the lc polarized $\Q$-pair $(X,E+L)$ and the $\Z$-divisor $D \coloneqq E+\pi^*C_0$ on $X$. Since 
		$$D \cdot \ell_E = E \cdot \ell_E +(\pi^*C_0)\cdot \ell_E =-1$$
		and $\pi^*C_0$ is nef, we deduce that $\dbs(D)=E$. Since 
		\[ \mathcal{O}_X(D) \simeq \mathcal{O}_{\mathbb{P}(\mathcal{V})}(1)\otimes \pi^* \OO_S(C_0) \]
		and $\kappa(S,C_0)=0$, for any $m\in\N$ we have
		\begin{align*}
			h^0\big(X,\mathcal{O}_X(mD)\big) &= h^0\big(S,S^m\big(\mathcal{V}\otimes\mathcal{O}_S(C_0)\big)\big) \\ 
			&= h^0\big(S,S^m\big(\mathcal{O}_S(C_0)\oplus\mathcal{O}_S(-C_0-\ell_0)\big)\big) = 1 , 
		\end{align*}
		so $\kappa(X,D)=0$. In conclusion, we have
		\[ \emptyset \subsetneq \dbs(D) = E \subsetneq \sbs(D) = D \subsetneq \abs(D) = X . \]
	\end{exa}
	
	\begin{cor}
		\label{cor:loci_comparison_arb_lc_partial}
		Let $(X,B+M)$ be an NQC lc g-pair such that $K_X+B+M \equiv 0$ and let $D\in\Div_{\R}(D)$ with $|D|_\R \neq \emptyset$. If there exists $G \in |D|_\R$ such that 
		\[ \tau \coloneqq \sup \{ t \in [0,1] \mid K_X+ (B+tG) + M \emph{ is lc} \} > 0 , \]
		then
		\[ \nnef(D) = \dbs(D) . \]
		If, additionally, $K_X+B+M \sim_\R 0$ and if there exists an ample $\R$-divisor $A$ on $X$ with $B \geq A \geq 0$, then
		\[ \nnef(D) = \dbs(D) = \sbs(D) . \]
	\end{cor}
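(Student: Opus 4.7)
The plan is, for both equalities, to perturb the boundary of $(X,B+M)$ by the positive multiple $\tau G$ to produce a new g-pair whose adjoint divisor is numerically (respectively $\R$-linearly) equivalent to a positive multiple of $D$, and then to invoke Theorem \ref{thm:loci_comparison_can}. Since the lc condition is closed in the coefficient, the supremum $\tau$ is attained, and so $\bigl(X,(B+\tau G)+M\bigr)$ is an NQC lc g-pair, the nef part and its NQC decomposition being inherited unchanged from $(X,B+M)$. If one prefers to bypass the closedness remark, any $t_0\in(0,\tau)$ would serve equally well in what follows.

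For the first equality, since $G \sim_\R D$ and $K_X+B+M \equiv 0$, we have
\[ K_X+(B+\tau G)+M \;\equiv\; \tau G \;\equiv\; \tau D , \]
which is pseudo-effective. Theorem \ref{thm:loci_comparison_can}(i) applied to $\bigl(X,(B+\tau G)+M\bigr)$ then gives
\[ \nnef\bigl(K_X+(B+\tau G)+M\bigr) \;=\; \dbs\bigl(K_X+(B+\tau G)+M\bigr) , \]
and by the numerical and scaling invariance recorded in Remarks \ref{rem:abs+dbs_properties}(1)(2) and \ref{rem:nnef_properties}(1)(2), the two sides coincide with $\dbs(D)$ and $\nnef(D)$ respectively, yielding $\nnef(D)=\dbs(D)$.

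For the second equality, the stronger hypothesis $K_X+B+M \sim_\R 0$ upgrades the above equivalence to $K_X+(B+\tau G)+M \sim_\R \tau D$. Writing $B = A + (B-A)$ with $B-A \geq 0$, I can re-present the NQC lc g-pair above as $\bigl(X,((B-A+\tau G)+A)+M\bigr)$, so that the effective ample $\R$-divisor $A$ now appears as a summand of the boundary. Theorem \ref{thm:loci_comparison_can}(ii) then yields equality of all three loci for the adjoint divisor, and by the $\R$-linear invariance of $\sbs$ (immediate from its definition) together with the invariances already used for $\nnef$ and $\dbs$, this transfers to $\nnef(D) = \dbs(D) = \sbs(D)$. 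I anticipate no genuine obstacle; the only minor technical point is the attainment of $\tau$, which in any case is inessential thanks to the alternative noted in the first paragraph.
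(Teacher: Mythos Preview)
Your proof is correct and follows essentially the same route as the paper: perturb the boundary by $\tau G$ to obtain an NQC lc g-pair whose adjoint divisor is numerically (resp.\ $\R$-linearly) equivalent to $\tau D$, then apply Theorem~\ref{thm:loci_comparison_can}(i) (resp.\ (ii)) and pass back to $D$ via the invariance properties in Remarks~\ref{rem:sbs_properties}(2), \ref{rem:abs+dbs_properties}(1)(2) and \ref{rem:nnef_properties}(1)(2). The paper simply asserts that $\bigl(X,(B+\tau G)+M\bigr)$ is lc without comment on attainment of $\tau$, but as you note this is harmless.
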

	
	\begin{proof}
		Note that $\big(X, (B + \tau G) + M \big)$ is an NQC lc g-pair such that $ K_X + B + \tau G + M \equiv \tau D $ (resp.\ $ K_X + B + \tau G + M \sim_\R \tau D $), so the first (resp.\ second) part of the statement follows from Theorem \ref{thm:loci_comparison_can}(i) (resp.\ Theorem \ref{thm:loci_comparison_can}(ii)), taking Remarks \ref{rem:abs+dbs_properties}(1)(2) and \ref{rem:nnef_properties}(1)(2) into account.
	\end{proof}
	
	With the aid of Theorem \ref{thm:EGMM_klt_abundant} we can refine both Theorem \ref{thm:loci_comparison_can}(iii) and Corollary \ref{cor:loci_comparison_arb_nef_antican}(ii) by replacing the assumption that the given $\R$-divisor $D$ is big with the weaker hypothesis that $D$ is abundant.
	
	\begin{cor}
		\label{cor:loci_comparison_abundant}
		Let $(X,B+M)$ be an NQC klt g-pair. The following statements hold:
		\begin{enumerate}[\normalfont (i)]
			\item If $K_X+B+M$ is pseudo-effective and abundant, then 
			\[ \nnef(K_X+B+M) = \dbs(K_X+B+M) = \sbs(K_X+B+M) . \]
			
			\item If $K_X+B+M \sim_\R 0$ and if $D$ is a pseudo-effective and abundant $\R$-divisor on $X$, then
			\[ \nnef(D) = \dbs(D) = \sbs(D) . \]
		\end{enumerate}
	\end{cor}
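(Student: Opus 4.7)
The plan is to deduce part (i) directly from Theorem~\ref{thm:EGMM_klt_abundant}, combined with the general principle recorded after Lemma~\ref{lem:loci_non_positive}: whenever an NQC lc g-pair admits a good minimal model, the three loci $\nnef$, $\dbs$ and $\sbs$ of its adjoint divisor coincide. Concretely, I would invoke Theorem~\ref{thm:EGMM_klt_abundant} to produce a good minimal model $\varphi\colon X \dashrightarrow Y$ of $(X,B+M)$. By \cite[Remark 2.6]{LT22b}, the birational contraction $\varphi$ is $(K_X+B+M)$-non-positive and $\varphi_*(K_X+B+M)$ is semi-ample, so Lemma~\ref{lem:loci_non_positive} applied with $D = K_X+B+M$ immediately yields the desired triple equality.

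For part (ii), the strategy is to reduce to part (i) by absorbing $D$ into the boundary of a new klt g-pair, mimicking the argument of Corollary~\ref{cor:loci_comparison_arb_nef_antican}(ii) but with the abundance hypothesis replacing bigness. Since $D$ is pseudo-effective and abundant, one has $\kappa_\iota(X,D) = \kappa_\sigma(X,D) \geq 0$, hence $|D|_\R \neq \emptyset$. First I would replace $D$ by an effective representative $D' \in |D|_\R$; this is harmless, since $\nnef$, $\dbs$ and $\sbs$ all coincide for $D$ and $D'$ (by $\R$-linear equivalence, using Remarks~\ref{rem:sbs_properties}(3)(4) after writing $D - D'$ as a difference of semi-ample divisors, or more directly from the definitions). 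Next, for $\varepsilon > 0$ sufficiently small, the triple $\big(X, (B + \varepsilon D') + M\big)$ is again an NQC klt g-pair, and
\[ K_X + B + \varepsilon D' + M \sim_\R \varepsilon D' \]
is pseudo-effective and abundant, since both properties are preserved under positive rescaling and $\R$-linear equivalence. Applying part (i) to this new g-pair and translating back via Remarks~\ref{rem:sbs_properties}(2), \ref{rem:abs+dbs_properties}(1)(2) and \ref{rem:nnef_properties}(1)(2) yields the asserted equality for $D$.

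The main technical input, namely the existence of good minimal models for NQC klt g-pairs with pseudo-effective and abundant canonical class, has already been secured in Theorem~\ref{thm:EGMM_klt_abundant}, so I do not anticipate any substantive obstacle beyond the bookkeeping of base-locus invariances. The only subtlety is the order of operations in (ii): one must first pass to an effective representative $D' \in |D|_\R$ before scaling by $\varepsilon$, since the klt property of the modified g-pair requires the added boundary term $\varepsilon D'$ to be effective with small coefficients. Once this is observed, the proof reduces to a direct invocation of (i).
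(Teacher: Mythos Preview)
Your proposal is correct and follows exactly the paper's approach: part~(i) is obtained from Theorem~\ref{thm:EGMM_klt_abundant} together with Lemma~\ref{lem:loci_non_positive} (via \cite[Remark~2.6]{LT22b}), and part~(ii) is reduced to~(i) by replacing $D$ with an effective member of $|D|_\R$ and rescaling, precisely as in the proof of Corollary~\ref{cor:loci_comparison_arb_nef_antican}(ii). The extra bookkeeping you spell out (invariance of the loci and of abundance under $\R$-linear equivalence and rescaling, and the need to pass to an effective representative before perturbing the boundary) is accurate and makes explicit what the paper leaves implicit.
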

	
	\begin{proof}~
		
		\medskip
		
		\noindent (i) We conclude by Theorem \ref{thm:EGMM_klt_abundant} and Lemma \ref{lem:loci_non_positive}.
		
		\medskip
		
		\noindent (ii) Arguing as in the proof of Corollary \ref{cor:loci_comparison_arb_nef_antican}(ii), we may assume that $\big( X,(B+D)+M \big)$ is a klt g-pair such that $K_X+B+D+M \sim_\R D$. We conclude by (i).
	\end{proof}
	
	By combining Lemma \ref{lem:reduction_to_Q-pairs}(i) and \cite[Theorem 1.2]{CDB13}, we can readily extend the latter to the category of g-pairs: if $(X,B+M)$ is a klt g-pair and if $D$ is a pseudo-effective $\R$-divisor on $X$, then $ \nnef(D) = \dbs(D) $. We recall that the proof of \cite[Theorem 1.2]{CDB13} relies on the theory of multiplier ideals, whereas our Corollaries \ref{cor:loci_comparison_arb_nef_antican}, \ref{cor:loci_comparison_arb_lc_partial} and \ref{cor:loci_comparison_abundant}(ii) are consequences of the existence of good minimal models for certain classes of g-pairs.
	
	\medskip
	
	We conclude this section by discussing an application of Corollary \ref{cor:loci_comparison_abundant}(i) to the study of strictly nef divisors. Recall that a divisor $D\in\Div_{\Q}(X)$ on a projective variety $X$ is called \emph{strictly nef} if $D \cdot C > 0$ for every curve $C$ on $X$. Note that an ample divisor is strictly nef, but the converse does not necessarily hold, as demonstrated by Mumford's example \cite[Chapter I, Example 10.6]{Har70}. On the other hand, we have the following conjecture of Beltrametti and Sommese \cite{BelSom94} about the positivity of strictly nef divisors.
	
	\begin{conj}
		Let $X$ be a smooth projective variety and let $D$ be an effective strictly nef divisor on $X$. If $D-K_X$ is nef, then $D$ is ample.
	\end{conj}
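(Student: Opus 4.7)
The plan is to recast $D$ as the adjoint divisor of an NQC klt polarized pair and then apply the comparison results of Section \ref{section:comparison_loci}. Set $M \coloneqq D - K_X$, which is nef by hypothesis. Since $X$ is smooth, $(X, 0+M)$ is an NQC polarized klt g-pair whose adjoint divisor is $K_X + 0 + M = D$, and this divisor is pseudo-effective (in fact nef).

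I would then make two easy reductions. First, since $D$ is nef, Remarks \ref{rem:nnef_properties}(3) and \ref{rem:abs+dbs_properties}(3) give $\nnef(D) = \dbs(D) = \emptyset$. Second, a strictly nef semi-ample divisor is automatically ample: the morphism defined by a high multiple of $D$ must be finite, because any positive-dimensional fiber would contain a curve $C$ on which $D$ is numerically trivial, contradicting $D \cdot C > 0$. Combined, these observations show that the conjecture is equivalent to the single assertion $\sbs(D) = \emptyset$.

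For this, Corollary \ref{cor:loci_comparison_abundant}(i) applied to $(X, 0+M)$ tells us that $\sbs(D) = \emptyset$ follows from abundance of $D$, i.e., from $\kappa_\iota(X,D) = \kappa_\sigma(X,D)$. In particular, it is enough to show that $D$ is \emph{big}, since a nef and big divisor is trivially abundant, and Theorem \ref{thm:loci_comparison_can}(iii) applied to $(X,0+M)$ then yields $\sbs(D)=\emptyset$ directly.

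The hard part is therefore establishing bigness (equivalently, abundance) of the adjoint divisor $D$. Indeed, proving that every effective strictly nef divisor $D$ with $D - K_X$ nef is big is essentially tantamount to the conjecture itself, so genuinely new input is required beyond the machinery of this paper. A plausible line of attack is induction on $\dim X$: restrict to a general member $\Delta \in |D|$, hope that adjunction yields a polarized klt structure on $\Delta$ together with strict nefness of $D|_\Delta$ and nefness of $D|_\Delta - K_\Delta$, and then try to promote bigness of $D|_\Delta$ to bigness of $D$ through volume estimates. Alternatively one could attempt a non-vanishing argument of Shokurov type. Either route requires essentially new ideas; the results of this paper only serve to repackage the conjecture as this abundance/non-vanishing problem for $D$.
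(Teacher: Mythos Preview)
The statement you are attempting to prove is recorded in the paper as a \emph{conjecture} of Beltrametti and Sommese, not as a theorem; the paper does not claim to prove it. What the paper does prove is the special case in which $D$ is assumed to be \emph{abundant} (Proposition \ref{prop:strictly_nef_abundant}). Your reduction is exactly the one the paper carries out there: set $L \coloneqq D - K_X$, view $(X,0+L)$ as a klt polarized pair with adjoint divisor $D$, apply Corollary \ref{cor:loci_comparison_abundant}(i) to obtain $\sbs(D) = \dbs(D) = \emptyset$, deduce semi-ampleness, and conclude ampleness from strict nefness (the paper cites \cite[Lemma 1.4]{CCP08} for this last step, which is the same observation you make about finiteness of the semi-ample morphism).

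You correctly identify that the remaining obstacle is establishing abundance (or bigness) of $D$, and you are right that this is not accessible with the tools of the paper; indeed this is precisely why the paper leaves the general statement as a conjecture and only proves Proposition \ref{prop:strictly_nef_abundant}. Your proposed inductive or non-vanishing attacks are reasonable speculation, but neither is carried out, so what you have written is a correct reduction rather than a proof. In summary: your analysis matches the paper's, and the gap you flag is genuine and open.
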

	
	It follows from \cite[Lemma 1.3]{Ser95} that the above conjecture holds for strictly nef divisors which are \emph{big}. We now prove a version of Beltrametti and Sommese's conjecture in the singular setting for strictly nef $\Q$-divisors which are \emph{abundant}.
	
	\begin{prop}
		\label{prop:strictly_nef_abundant}
		Let $(X,B+M)$ be a klt $\Q$-g-pair and let $D$ be an abundant strictly nef $\Q$-divisor on $X$. If $D - (K_X+B+M)$ is nef, then $D$ is ample.
	\end{prop}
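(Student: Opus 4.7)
The plan is to transfer abundance and the given nefness hypotheses into the existence of a good minimal model for a modified g-pair in which $D$ itself plays the role of the adjoint divisor, and then to exploit the tension between strict nefness and having a nontrivial semi-ample fibration.

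First, I would set $L \coloneqq D - (K_X+B+M)$, which is a nef $\Q$-Cartier $\Q$-divisor by hypothesis. I would absorb $L$ into the nef part of the g-pair structure: if $(X,B+M)$ comes with data $f \colon W \to X$ and nef part $M_W$, then $(X, B+(M+L))$ naturally carries a $\Q$-g-pair structure with the same $f$ and nef part $M_W + f^*L$, which is again nef. A direct comparison of the defining relations $K_W + B_W + M_W \sim_\Q f^*(K_X+B+M)$ and $K_W + B'_W + (M_W + f^*L) \sim_\Q f^*(K_X+B+M+L)$ shows $B'_W = B_W$, so the discrepancies are unchanged, and hence $\bigl(X, B+(M+L)\bigr)$ is still klt. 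Its adjoint divisor is exactly $D$, which is pseudo-effective (as $D$ is nef) and abundant by hypothesis.

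Next I would apply Theorem \ref{thm:EGMM_klt_abundant} to this new g-pair to obtain a good minimal model $\varphi \colon X \dashrightarrow Y$. Choosing a resolution of indeterminacies $(p,q) \colon W \to X \times Y$ with $p^*D \sim_\R q^*(\varphi_*D) + E$ for some effective $q$-exceptional $\R$-divisor $E$, Lemma \ref{lem:loci_non_positive} yields $\nnef(D) = p(\Supp E)$. Since $D$ is nef, Remark \ref{rem:nnef_properties}(3) gives $\nnef(D) = \emptyset$, forcing $E = 0$. Thus $p^*D \sim_\R q^*(\varphi_*D)$ is semi-ample, and since $p$ is a birational morphism of normal projective varieties and therefore has connected fibres, a standard descent argument (every fibre of $p$ is contracted by the morphism defined by $p^*D$, so it factors through $p$) allows $D$ to descend to a semi-ample $\Q$-divisor on $X$.

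Finally, I would observe that a semi-ample strictly nef $\Q$-divisor is automatically ample: writing $D = \pi^*A$ for a morphism $\pi \colon X \to Z$ and an ample $\Q$-divisor $A$ on $Z$, any curve $C$ contracted by $\pi$ would satisfy $D \cdot C = A \cdot \pi_*C = 0$, contradicting strict nefness; hence $\pi$ is finite and $D$ is ample as the pullback of an ample divisor by a finite morphism. The genuine content of the argument is the invocation of Theorem \ref{thm:EGMM_klt_abundant}, for which the abundance hypothesis is indispensable; everything else is formal manipulation.
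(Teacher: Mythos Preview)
Your proof is correct and follows essentially the same route as the paper: the paper also absorbs $L = D - (K_X+B+M)$ into the nef part, then invokes Corollary~\ref{cor:loci_comparison_abundant}(i) (whose proof is precisely Theorem~\ref{thm:EGMM_klt_abundant} combined with Lemma~\ref{lem:loci_non_positive}, exactly as you unpack it) to obtain $\sbs(D)=\emptyset$, concludes semi-ampleness via Remark~\ref{rem:sbs_properties}(4), and finishes by citing \cite[Lemma~1.4]{CCP08} for the implication ``semi-ample $+$ strictly nef $\Rightarrow$ ample'', which you prove directly. The only cosmetic point is that your descent argument for semi-ampleness should use $\sim_\Q$ rather than $\sim_\R$ (which is available here since everything is a $\Q$-g-pair and the MMP preserves $\Q$-linear equivalence), so that semi-ampleness genuinely transfers along the equivalence.
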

	
	\begin{proof}
		Setting $ L \coloneqq D - (K_X+B+M) $, we obtain a klt $\Q$-g-pair $ \big(X,B+(M+L)\big) $ such that $K_X+B+M+L = D$ is strictly nef and abundant. Therefore, by Remark \ref{rem:abs+dbs_properties}(3) and Corollary \ref{cor:loci_comparison_abundant}(i) we obtain $ \dbs(D) = \sbs(D) = \emptyset $, so $D$ is semi-ample by Remark \ref{rem:sbs_properties}(4). Thus, the assertion follows from \cite[Lemma 1.4]{CCP08}.
	\end{proof}
	
	We would like to thank P.\ Chaudhuri for informing us about an alternative proof of Proposition \ref{prop:strictly_nef_abundant} which uses \cite[Theorem 2 and Lemma 6]{Chaud23b} to show that $D$ is semi-ample.

	\section{On the uniruledness of the asymptotic base loci: the klt case}
	\label{section:uniruledness_loci}
	
	Our first objective in this section is to prove Theorem \ref{thm:loci_uniruledness_can}. To this end, we first prove two auxiliary results, namely Lemmata \ref{lem:MMP_step_abs} and \ref{lem:exc_locus_uniruled}.
	
	\begin{lem}
		\label{lem:MMP_step_abs}
		Let $(X,B+M)$ be an NQC klt g-pair such that $K_X+B+M$ is big but not nef. Consider a step of a $(K_X+B+M)$-MMP:
		\begin{center}
			\begin{tikzcd}
				(X,B+M) \arrow[rr, dashed, "\varphi"] \arrow[dr, "g" swap] && (X',B'+M') \arrow[dl, "h"] \\
				& Y
			\end{tikzcd}
		\end{center}
		Let $V$ be an irreducible component of $\abs(K_X+B+M)$. The following statements hold:
		\begin{enumerate}[\normalfont (i)]
			\item If $V$ is contained in $\Exc(g)$, then $V$ is uniruled.
			
			\item If $V$ is not contained in $\Exc(g)$, then the strict transform $V'$ of $V$ on $X'$ is an irreducible component of $\abs(K_{X'}+B'+M')$.
		\end{enumerate}
	\end{lem}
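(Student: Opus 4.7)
The plan is to handle the two parts separately.

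For part (i), the strategy is to promote $V$ from an irreducible component of $\abs(K_X+B+M)$ to an irreducible component of $\Exc(g)$, after which Lemma \ref{lem:reduction_to_Q-pairs}(ii) immediately delivers uniruledness. The key input is the inclusion $\Exc(g) \subseteq \abs(K_X+B+M)$: every irreducible curve $C$ in the contracted extremal ray $R$ satisfies $(K_X+B+M) \cdot C < 0$, so $C \subseteq \dbs(K_X+B+M) \subseteq \abs(K_X+B+M)$ by Remark \ref{rem:dbs_negative_intersection}, and such curves cover $\Exc(g)$ by the very construction of the Mori contraction $g$. Once this inclusion is available, the maximality of $V$ as an irreducible closed subset of $\abs(K_X+B+M)$ forces $V$ to be a maximal irreducible closed subset of the intermediate set $\Exc(g)$ as well, i.e., an irreducible component of $\Exc(g)$.

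For part (ii), set $D \coloneqq K_X+B+M$ and $D' \coloneqq K_{X'}+B'+M'$; both are big, since $\varphi$ is birational. The plan is to compare $\abs(D)$ and $\abs(D')$ on a common resolution and conclude away from the exceptional loci. Using the $D$-non-positivity of $\varphi$, choose a smooth resolution of indeterminacies $(p,q) \colon W \to X \times X'$ with
\[ p^*D \sim_\R q^*D' + E , \]
where $E$ is effective and $q$-exceptional. Applying Remark \ref{rem:abs+dbs_properties}(5) to each side yields
\[ p^{-1}\bigl(\abs(D)\bigr) \cup \Exc(p) = \abs(p^*D) = \abs(q^*D' + E) = q^{-1}\bigl(\abs(D')\bigr) \cup \Exc(q) . \]
Since $\varphi$ restricts to an isomorphism between $X \setminus \Exc(g)$ and $X' \setminus \Exc(h)$ up to codimension $\geq 2$ subsets, restricting the displayed equality to $U_W \coloneqq W \setminus \bigl(\Exc(p) \cup \Exc(q)\bigr)$ and pushing forward via $p$ and $q$ identifies $\abs(D)$ and $\abs(D')$ on the dense open loci where $\varphi$ is a local isomorphism. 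Because $V \not\subseteq \Exc(g)$, the open subset $V \cap (X \setminus \Exc(g))$ is dense in $V$ and is mapped by $\varphi$ isomorphically onto a dense open subset of the strict transform $V'$ that lies in $\abs(D')$; taking closures in $X'$ gives $V' \subseteq \abs(D')$, and by construction $V' \not\subseteq \Exc(h)$.

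For the maximality of $V'$ as an irreducible component of $\abs(D')$, suppose that $V' \subseteq W'$ for some irreducible closed $W' \subseteq \abs(D')$. Since $V' \not\subseteq \Exc(h)$, one has $W' \not\subseteq \Exc(h)$, so by the symmetric version of the above argument applied to $\varphi^{-1}$ the strict transform $W$ of $W'$ on $X$ is irreducible, contains $V$, and is contained in $\abs(D)$; the maximality of $V$ then forces $V=W$, whence $V'=W'$. The main technical point I anticipate is verifying that the displayed equality on $W$ restricts cleanly to the open loci where $\varphi$ is a local isomorphism and that strict transforms interact properly with taking closures; once Remark \ref{rem:abs+dbs_properties}(5) is in play, however, the remaining verifications reduce to careful but formal bookkeeping with the exceptional divisors of $p$ and $q$.
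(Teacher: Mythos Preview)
Your proposal is correct and follows essentially the same approach as the paper. For (i) both arguments reduce to showing $\Exc(g)\subseteq\abs(K_X+B+M)$ and then invoking Lemma~\ref{lem:reduction_to_Q-pairs}(ii); for (ii) both hinge on the identity
\[
p^{-1}\bigl(\abs(D)\bigr)\cup\Exc(p)=q^{-1}\bigl(\abs(D')\bigr)\cup\Exc(q)
\]
obtained from Remark~\ref{rem:abs+dbs_properties}(5), the only cosmetic difference being that the paper tracks the strict transform $V_W$ on $W$ and its generic point directly, whereas you restrict to the open locus $U_W$ and argue maximality by contradiction (your phrase ``applied to $\varphi^{-1}$'' should really just be ``read the displayed equality from right to left,'' since $\varphi^{-1}$ need not be a contraction, but the equality is already symmetric so this is harmless).
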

	
	\begin{proof}~
		
		\medskip
		
		\noindent
		(i) First, we claim that $\Exc(g) \subseteq \abs(K_X+B+M)$. Indeed, pick a curve $C \subseteq X$ contracted by $g$ and fix an element $D \in |K_X+B+M|_\R$. Then $(K_X+B+M) \cdot C < 0$, so $D \cdot C < 0$, which implies that $C \subseteq \Supp D.$ Therefore, \[C \subseteq \sbs(K_X+B+M) \subseteq \abs(K_X+B+M),\] and the above assertion follows. Now, by assumption and by the claim we infer that $V$ is an irreducible component of $\Exc(g)$, so it is uniruled by Lemma \ref{lem:reduction_to_Q-pairs}(ii).
		
		\medskip
		
		\noindent (ii) 
		By assumption there exists a resolution of indeterminacies $(p,q) \colon W \to X \times X'$ of $\varphi$ such that $p$ (resp.\ $q$) is an isomorphism over the generic point of $V$ (resp.\ $V'$). 
		\begin{center}
			\begin{tikzcd}
				& W \arrow[dr, "q"] \arrow[dl, "p" swap] \\
				X \arrow[rr, "\varphi", dashed] && X'
			\end{tikzcd}
		\end{center}
		It follows from \cite[Lemma 2.8(ii)]{LMT23} that we may write 
		\[ p^* (K_X+B+M) \sim_\R q^* (K_{X'}+B'+M') + E , \]
		where $E$ is an effective $q$-exceptional $\R$-Cartier $\R$-divisor on $W$, so by Remark \ref{rem:abs+dbs_properties}(5) we obtain
		\begin{align*}
			p^{-1} \big( \abs(K_X+B+M) \big) \cup \Exc(p) &= \abs \big( p^*(K_X+B+M) \big) \\ 
			&= \abs \big( q^* (K_{X'}+B'+M') + E \big) \\
			&= q^{-1} \big( \abs(K_{X'}+B'+M') \big) \cup \Exc(q) .
		\end{align*}
		
		Set $V_W \coloneqq p^{-1}_* V = q^{-1}_* V'$ and observe that $V_W \subseteq p^{-1} \big( \abs(K_X+B+M) \big)$. Note that the generic point of $V_W$ does not belong to either $\Exc(p)$ or $\Exc(q)$ by construction. Therefore, $ V_W $ is an irreducible subset of $ q^{-1} \big( \abs(K_{X'}+B'+M') \big) $, and thus $V' $ is an irreducible subset of $ \abs (K_{X'}+B'+M')$. Since $V$ is an irreducible component of $\abs(K_X+B+M)$, we infer that $V_W$ is an irreducible component of $p^{-1} \big( \abs(K_X+B+M) \big)$, and thus $V_W$ is an irreducible component of  $q^{-1} \big( \abs(K_{X'}+B'+M') \big)$. Consequently, $V'$ is an irreducible component of $\abs(K_{X'}+B'+M')$, as asserted.
	\end{proof}
	
	\begin{lem}
		\label{lem:exc_locus_uniruled}
		Let $(X,B+M)$ be an NQC klt g-pair. Assume that there exists a projective birational morphism $f \colon X \to Y$ with connected fibers to a normal projective variety $Y$ such that $K_X+B+M \sim_{\R,Y} 0$. Then every irreducible component of $\Exc(f)$ is uniruled.
	\end{lem}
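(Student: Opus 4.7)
The plan is to reduce to the setting of a klt pair with rational coefficients and then invoke a classical uniruledness result for fibers of birational morphisms from klt varieties.

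As a first step, I would apply Lemma \ref{lem:reduction_to_Q-pairs}(i) to obtain an effective $\Q$-divisor $B'$ on $X$ such that $(X,B')$ is a klt $\Q$-pair. The role of the NQC klt hypothesis on the g-pair $(X,B+M)$ is precisely to guarantee that the underlying variety $X$ supports a genuine klt pair structure, which is all that the Hacon--McKernan uniruledness theorem takes as input. The condition $K_X+B+M\sim_{\R,Y}0$ will not itself enter the proof; it is part of the framework in which this lemma is to be applied in Section \ref{section:uniruledness_loci}.

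As a second step, I would invoke \cite[Corollary 1.3(2)]{HM07a} (or \cite[Theorem 1.12]{Fuj21b}) applied to the klt pair $(X,B')$ and the projective birational morphism $f\colon X \to Y$ with connected fibers, which guarantees that every fiber of $f$ is rationally chain connected. To descend this fiberwise information to an irreducible component $V$ of $\Exc(f)$, I would pick a general point $v\in V$ lying in no other irreducible component of $\Exc(f)$, and set $F\coloneqq f^{-1}(f(v))$. Since $v\in\Exc(f)$ and $f$ has connected fibers, $F$ is connected, positive-dimensional, and entirely contained in $\Exc(f)$; by the cited theorem, $F$ is rationally chain connected, so some rational curve $C\subseteq F \subseteq \Exc(f)$ passes through $v$. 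As $C$ is irreducible, it must be contained in exactly one irreducible component of $\Exc(f)$, and the presence of $v\in C$ forces $C\subseteq V$. Letting $v$ vary over the dense open subset of $V$ on which no other component of $\Exc(f)$ meets, one concludes that $V$ is covered by rational curves, hence uniruled.

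The only delicate point is this final geometric step: the fiber $F$ may consist of several irreducible pieces belonging to distinct components of $\Exc(f)$, and the rational curve furnished by Hacon--McKernan lives \emph{a priori} only in $F$, not in the prescribed component $V$. The general-point choice of $v$, avoiding all other components of $\Exc(f)$, is what pins the rational curve down inside $V$ and makes the fiberwise rational chain connectedness translate into uniruledness of $V$.
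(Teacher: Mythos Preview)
Your proposal has a genuine gap. The result \cite[Corollary 1.3(2)]{HM07a} concerns $(K_X+B')$-negative extremal contractions, so it requires $-(K_X+B')$ to be $f$-ample; it does not assert that an arbitrary projective birational morphism out of a klt pair has rationally chain connected fibers. That stronger statement is in fact false: if $Y$ is the projective cone over an elliptic curve and $f\colon X\to Y$ is its minimal resolution (cf.\ Example~\ref{exa:blowup_vertex_of_cone}), then $X$ is smooth, so $(X,0)$ is a klt $\Q$-pair, yet the unique positive-dimensional fiber of $f$ is an elliptic curve, which is neither rationally chain connected nor uniruled. Thus a klt structure on the \emph{source}---which is all that Lemma~\ref{lem:reduction_to_Q-pairs}(i) supplies, with no relation whatsoever to $f$---is insufficient, and your claim that the condition $K_X+B+M\sim_{\R,Y}0$ ``will not itself enter the proof'' is incorrect.

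The paper's argument uses precisely this hypothesis: the canonical bundle formula \cite{Fil20,HanLiu21}, applied to the klt-trivial fibration $f$, endows the \emph{target} $Y$ with an NQC klt g-pair structure, which Lemma~\ref{lem:reduction_to_Q-pairs}(i) then upgrades to a klt $\Q$-pair $(Y,\Delta_Y)$. At that point one may invoke \cite[Corollary 1.5(1)]{HM07a}, which states that a birational morphism \emph{to} a dlt pair has rationally chain connected, hence uniruled, positive-dimensional fibers. The second half of your argument---choosing a general point of a component $V$ of $\Exc(f)$ and locating a rational curve through it inside the fiber---matches the paper's reasoning and is correct.
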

	
	\begin{proof}
		By the canonical bundle formula \cite{Fil20,HanLiu21} there exists an NQC klt g-pair structure $ (Y,B_Y+M_Y) $ on $ Y $ such that
		\[ K_X+B+M \sim_\R f^*(K_Y+B_Y+M_Y) , \]
		so there exists a klt $\Q$-pair structure $(Y,\Delta_Y)$ on $Y$ by Lemma \ref{lem:reduction_to_Q-pairs}(i). It follows from \cite[Corollary 1.5(1)]{HM07a} and \cite[Proposition IV.3.3(4)]{Kol96} that the positive-dimensional fibers of $f$ are uniruled.
		
		Consider an irreducible component $V$ of $\Exc(f)$ and fix a general point $x \in V$, that is, a point of $V$ which does not belong to any other irreducible component of $\Exc(f)$. Then the fiber $f^{-1}\big(f(x)\big)$ is positive-dimensional, and hence uniruled. Therefore, there exists an irreducible rational curve $C \subseteq f^{-1}\big(f(x)\big)$ passing through $x$, and thus $C \subseteq V$. Consequently, $V$ is uniruled.
	\end{proof}
	
	The following example demonstrates that neither \cite[Theorem 2]{Kaw91} nor \cite[Corollary 1.5(1)]{HM07a} hold in the strictly lc setting.
	
	\begin{exa}[{\cite[Example 6.4]{Tak08}}]
		\label{exa:blowup_vertex_of_cone}
		Let $S \subseteq \mathbb{P}^3$ be a projective cone over an elliptic curve $C$ and denote by $H$ a hyperplane section of $S$. Let $\mu \colon X\to S$ be the blow-up of $S$ at the vertex $v \in S$ and denote by $E$ the exceptional divisor of $\mu$. Then $(X,E)$ is a strictly lc pair such that $ -(K_X+E) = \mu^*H $ is big and semi-ample. Note also that $X$ is smooth and a $\mathbb{P}^1$-bundle over $C$, so it is uniruled, whereas $E = \mu^{-1}(v) \cong C$ is not uniruled.
	\end{exa}
	
	As a corollary of Lemma \ref{lem:exc_locus_uniruled}, we give an affirmative answer to \cite[Question 6.4]{Bir17} (over an algebraically closed field of characteristic $0$) in a more general framework.
	
	\begin{cor}
		\label{cor:Bir17_Q}
		If $(X,B+M)$ is an NQC klt g-pair such that $K_X+B+M$ is nef and big, then  $\abs(K_X+B+M)$ is covered by rational curves $C$ such that $(K_X+B+M) \cdot C = 0$.
	\end{cor}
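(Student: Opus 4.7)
The plan is to combine the basepoint-free theorem for NQC klt g-pairs with the already established Lemma~\ref{lem:exc_locus_uniruled}. Since $K_X+B+M$ is nef and big and $(X,B+M)$ is NQC klt, it should be semi-ample by \cite[Theorem 2.12(ii)]{TX23a} (this is the analogue, in the g-pair setting, of the basepoint-free theorem used in the proof of Theorem~\ref{thm:loci_comparison_can}(iii)). Let $f\colon X\to Y$ denote the ample model of $K_X+B+M$: this is a projective birational morphism with connected fibers to a normal projective variety $Y$, and there exists an ample $\R$-divisor $A$ on $Y$ such that
\[ K_X+B+M \sim_\R f^*A . \]
In particular $K_X+B+M \sim_{\R,Y} 0$, which puts us exactly in the setting of Lemma~\ref{lem:exc_locus_uniruled}.

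Next I would identify the augmented base locus with $\Exc(f)$. Indeed, applying Remark~\ref{rem:abs+dbs_properties}(5) to the birational morphism $f$ and the big $\R$-divisor $A$ (with zero exceptional correction), one gets
\[ \abs(K_X+B+M) = \abs(f^*A) = f^{-1}\big(\abs(A)\big) \cup \Exc(f) = \Exc(f) , \]
where the last equality uses that $A$ is ample so $\abs(A)=\emptyset$.

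It then suffices to produce, through a general point of each irreducible component $V$ of $\Exc(f)$, a rational curve $C$ contained in a fiber of $f$. By Lemma~\ref{lem:exc_locus_uniruled}, every irreducible component of $\Exc(f)$ is uniruled; more precisely, its proof shows that through any general point $x\in V$ there passes an irreducible rational curve lying in the positive-dimensional fiber $f^{-1}(f(x))\subseteq V$. Such a curve $C$ is contracted by $f$, so the projection formula gives
\[ (K_X+B+M)\cdot C = f^*A\cdot C = A\cdot f_*C = 0 , \]
which finishes the argument.

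I do not expect a serious obstacle here, since all the real work has been done upstream: the semi-ampleness input packages the big-and-nef hypothesis into a birational contraction $f$ with $K_X+B+M\sim_{\R,Y}0$, and the uniruledness of the fibers of $f$ is exactly Lemma~\ref{lem:exc_locus_uniruled}. The only subtlety worth highlighting is that one must invoke the general-point statement implicit in the proof of Lemma~\ref{lem:exc_locus_uniruled}, rather than just the bare uniruledness of components, in order to control the intersection $(K_X+B+M)\cdot C$; but that refinement is already present in that proof.
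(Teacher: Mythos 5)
Your argument is correct and is essentially identical to the paper's proof: semi-ampleness via \cite[Theorem 2.12(ii)]{TX23a}, identification of $\abs(K_X+B+M)$ with $\Exc(f)$ for the resulting birational fibration $f$, and Lemma \ref{lem:exc_locus_uniruled} producing rational curves in the fibers, hence $(K_X+B+M)$-trivial. Your closing remark about needing the fiberwise refinement of Lemma \ref{lem:exc_locus_uniruled} (not just bare uniruledness of components) is exactly the point the paper also relies on.
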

	
	\begin{proof}
		By assumption and by \cite[Lemma 2.10 and Theorem 2.12(ii)]{TX23a} we infer that $K_X+B+M$ is semi-ample. Therefore, there exists a birational fibration $f \colon X \to Y$ such that $K_X+B+M \sim_\R f^* A$, where $A$ is an ample $\R$-divisor on $Y$, so by Remarks \ref{rem:sbs_properties}(4) and \ref{rem:abs+dbs_properties}(5) we deduce that $ \abs(K_X+B+M) = \Exc(f) $. Hence, Lemma \ref{lem:exc_locus_uniruled} implies that the irreducible components of $\Exc(f)$ are covered by rational curves $C$ which are contracted by $f$, and thus $(K_X+B+M) \cdot C = 0$.
	\end{proof}
	
	Using Remark \ref{rem:loci_comparison_can} and Corollary \ref{cor:Bir17_Q}, we can also deduce readily the following similar statement in the dual setting, cf.\ \cite[Theorem 1.3]{Tak08}.
	
	\begin{cor}
		\label{cor:abs_uniruledness_weak_Fano}
		If $(X,B+M)$ is an NQC klt g-pair such that ${-}(K_X+B+M)$ is nef and big, then the irreducible components of $\abs \big( {-}(K_X+B+M) \big)$ are covered by rational curves $C$ such that ${-}(K_X+B+M) \cdot C = 0$.
	\end{cor}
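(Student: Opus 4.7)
The plan is to reinterpret $-(K_X+B+M)$ as the adjoint divisor of an auxiliary NQC klt generalized pair on $X$ and then invoke Corollary~\ref{cor:Bir17_Q} verbatim.

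First, I would set $D \coloneqq -(K_X+B+M)$, which is nef (hence pseudo-effective) and big by hypothesis, and apply Remark~\ref{rem:loci_comparison_can} to $(X,B+M)$ with this $D$ and with $\lambda = 1$. Since $-(K_X+B+M) + \lambda D = 2D$ is a non-negative multiple of the nef $\R$-Cartier divisor $D$, the remark produces an NQC klt g-pair $\bigl(X, B + (M+L)\bigr)$ with
\[ L \coloneqq \lambda D - (K_X+B+M) = -2(K_X+B+M) \quad \text{and} \quad K_X + B + (M+L) = D . \]
By hypothesis, this adjoint divisor is nef and big.

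Next, I would apply Corollary~\ref{cor:Bir17_Q} to the auxiliary g-pair $\bigl(X, B+(M+L)\bigr)$: each irreducible component of
\[ \abs\bigl(K_X+B+(M+L)\bigr) \;=\; \abs\bigl(-(K_X+B+M)\bigr) \]
is then covered by rational curves $C$ with $\bigl(K_X+B+(M+L)\bigr)\cdot C = 0$, i.e.\ ${-}(K_X+B+M)\cdot C = 0$, which is precisely the claim. Note that the equality of augmented base loci above uses only that the two adjoint divisors are literally equal (so one avoids the need for numerical-invariance subtleties).

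The main technical obstacle is ensuring that the auxiliary g-pair produced in the first step is genuinely NQC klt and not merely klt; equivalently, one must check that the nef summand $L = -2(K_X+B+M)$, added to the already NQC nef part $M$ on a common log resolution, preserves the NQC property. This is automatic for $\Q$-g-pairs and can be arranged in general by a short perturbation using Lemma~\ref{lem:reduction_to_Q-pairs}(i). A safe alternative is to bypass Corollary~\ref{cor:Bir17_Q} and argue directly: derive the semi-ampleness of $D$ from the base-point free theorem applied to the (not necessarily NQC) auxiliary klt g-pair, let $f \colon X \to Y$ be the resulting birational fibration with $D \sim_\R f^\ast A$ for some ample $\R$-divisor $A$ on $Y$, identify $\abs(D) = \Exc(f)$ via Remarks~\ref{rem:sbs_properties}(4) and~\ref{rem:abs+dbs_properties}(5), observe that $K_X+B+M \sim_{\R,Y} 0$, and then apply Lemma~\ref{lem:exc_locus_uniruled} to the \emph{original} NQC klt g-pair $(X,B+M)$ together with $f$; the rational curves in the positive-dimensional fibers of $f$ satisfy $D \cdot C = f^\ast A \cdot C = 0$, yielding the conclusion.
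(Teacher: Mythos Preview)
Your approach is correct and essentially identical to the paper's: the paper's proof is the one-liner ``Using Remark~\ref{rem:loci_comparison_can} and Corollary~\ref{cor:Bir17_Q}'', which is precisely your primary argument. You have moreover correctly identified and resolved the NQC subtlety (the auxiliary nef part $M+L$ need not be NQC even when $M$ is) that the paper glosses over; your alternative route---obtaining the semi-ample fibration $f$ from the auxiliary klt g-pair and then applying Lemma~\ref{lem:exc_locus_uniruled} to the \emph{original} NQC klt g-pair $(X,B+M)$, which satisfies $K_X+B+M\sim_{\R,Y}0$---is a clean fix.
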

	
	We now prove Theorem \ref{thm:loci_uniruledness_can}.
	
	\begin{proof}[Proof of Theorem \ref{thm:loci_uniruledness_can}]~
		
		\medskip
		
		\noindent (i) The equality $ \nnef(K_X+B+M) = \dbs(K_X+B+M) $ was established in Theorem \ref{thm:loci_comparison_can}(i), so it remains to prove the uniruledness of the components of this locus. If $V$ is an irreducible component of $\dbs(K_X+B+M)$, then by \cite[Lemma 1.14]{ELMNP06} we have
		\[ \dbs(K_X+B+M) = \bigcup_H \abs(K_X+B+M+H) , \]
		where the union is taken over all ample $\R$-divisors $H$ on $X$, so there exists an ample $\R$-divisor $A$ on $X$ such that $V$ is an irreducible component of $\abs(K_X+B+M+A)$. In view of Remark \ref{rem:abs+dbs_properties}(1), by replacing $A$ with a general member of its $\R$-linear system, we may assume that $\big(X, (B+A)+M \big)$ is an NQC klt g-pair, and we also observe that $K_X+B+A+M$ is big. Therefore, the claimed uniruledness of $V$ follows from the second part of (ii), which will be proved afterwards.
		
		\medskip
		
		\noindent (ii) We will first settle the second part of the statement, that is, we will show that every irreducible component $V$ of $\abs(K_X+B+M)$ is uniruled. To this end, by \cite[Lemma 2.10, Theorem 2.12(ii), and Theorem 4.2]{TX23a} we may run a $ (K_X+B+M)$-MMP with scaling of an ample divisor which terminates after $r-1$ steps (where $r \geq 1$) with a good minimal model $ (Y,B_Y+M_Y)$ of $(X,B+M)$. We denote by 
		\begin{center}
			\begin{tikzcd}
				(X_i,B_i+M_i) \arrow[rr, dashed, "\varphi_i"] \arrow[dr, "g_i" swap] && (X_{i+1},B_{i+1}+M_{i+1}) \arrow[dl, "h_i"] \\
				& Z_i
			\end{tikzcd}
		\end{center}
		the $i$-th step of this MMP, where $i \in \{1, \dots, r-1\}$, $X_1 \coloneqq X$ and $X_r \coloneqq Y$. We distinguish two cases. If there exists an index $i \in \{1, \dots, r-1\}$ such that the strict transform $V_i$ of $V$ on $X_i$ is contained in $\Exc(g_i)$, then $V_i$ is uniruled by Lemma \ref{lem:MMP_step_abs}(i), so $V$ itself is uniruled. Otherwise, Lemma \ref{lem:MMP_step_abs}(ii) implies that the strict transform $V_Y$ of $V$ on $Y$ is an irreducible component of $\abs(K_Y+B_Y+M_Y)$. Since $K_Y+B_Y+M_Y$ is big and semi-ample, there exists a birational fibration $\psi \colon Y \to Z$ such that $K_Y+B_Y+M_Y \sim_\R \psi^* A_Z$, where $A_Z$ is an ample $\R$-divisor on $Z$, and it follows readily from Remarks \ref{rem:sbs_properties}(4) and \ref{rem:abs+dbs_properties}(5) that $ \abs(K_Y+B_Y+M_Y) = \Exc(\psi) $. Therefore, $V_Y$ is an irreducible component of $\Exc(\psi)$, so it is uniruled by Lemma \ref{lem:exc_locus_uniruled}, and thus $V$ itself is uniruled. This completes the proof of the second part of (ii), and hence the proof of (i) as well.
		
		Finally, the first part of (ii) follows immediately from (i) and Theorem \ref{thm:loci_comparison_can}(iii).
		
		\medskip
		
		\noindent (iii) We conclude by (i) and Theorem \ref{thm:loci_comparison_can}(iv).
	\end{proof}
	
	We present below various examples in order to clarify the cases that are not treated by Theorem \ref{thm:loci_uniruledness_can}.
	
	In the setting of Theorem \ref{thm:loci_uniruledness_can}(i) we observe that for any $D\in\Div_{\R}(X)$ which is not big we have $\abs(D) = X$. An example in this context such that $X$ is not uniruled can be found in \cite[p.\ 212, Abundance]{BH14b}; see also Example \ref{exa:uniruledness_fails_II}(ii). The next example indicates that in general the irreducible components of $\sbs(D)$ need not be uniruled either, unless we have stronger hypotheses on both $K_X+B+M$ and $D$; see for instance Corollary \ref{cor:loci_uniruledness_abundant}(ii).
	
	\begin{exa}
		\label{exa:uniruledness_fails_I}
		Let $X \to \mathbb{P}^3$ be the blow-up of $\mathbb{P}^3$ at eight very general points. By \cite[Section 2]{LO16}, $-K_X$ is nef, but neither big nor semi-ample, and it is divisible by $2$ in $\Pic(X)$. If $S$ is a general member of the linear system $|{-}\frac{1}{2}K_X|$, then $S$ is a smooth projective surface isomorphic to $\mathbb{P}^2$ blown up at nine very general points. Moreover, the base locus of the linear system $|{-}K_X|$ is an elliptic curve, denoted by $C$. It follows from Remark \ref{rem:sbs_properties}(1) that $\sbs(-K_X) = C$. Now, set $B \coloneqq \varepsilon S$ for some $\varepsilon \in \Q \cap(0,1)$ and $M \coloneqq -K_X$. Then $(X,B+M)$ is a klt polarized $\Q$-pair such that $K_X+B+M = \varepsilon S $, and we have
		\begin{align*}
			\emptyset = \dbs(K_X+B+M) &\subsetneq \sbs(K_X+B+M) = C \\ 
			&\subsetneq \abs(K_X+B+M) = X .
		\end{align*}
		In particular, $\sbs(K_X+B+M)$ is not uniruled, while $\abs(K_X+B+M) = X$ is uniruled.
	\end{exa}
	
	The two examples below demonstrate that the analogue of Theorem \ref{thm:loci_uniruledness_can}(ii) in the strictly lc setting fails in general.
	
	\begin{exa}~
		
		\medskip
		
		\noindent (i) Let $\mu \colon X \to S$, $E$ and $H$ be as in Example \ref{exa:blowup_vertex_of_cone}. Set $B \coloneqq E$ and $M \coloneqq 2\mu^*H$. Then $(X,B+M)$ is an lc polarized $\Q$-pair which is not klt and whose canonical class $K_X+B+M = \mu^*H $ is big and semi-ample. Therefore, 
		\[ \nnef(K_X+B+M) = \dbs(K_X+B+M) = \sbs(K_X+B+M) = \emptyset , \]
		while 
		\[ \abs(K_X+B+M) = \Exc(\mu) = E \]
		is not uniruled.
		
		\medskip
		
		\noindent (ii) Consider the polarized $\Q$-pair $(Z,B_Z+M_Z)$ from Example \ref{exa:dbs_vs_sbs_vs_abs}(iii), which is lc but not klt and whose canonical class $K_Z+B_Z+M_Z$ is big but not nef. Observe that both loci $\sbs(K_Z+B_Z+M_Z)$ and $\abs(K_Z+B_Z+M_Z)$ have an irreducible component which is not uniruled.
	\end{exa}
	
	We now briefly discuss the complementary setting of Theorem \ref{thm:loci_uniruledness_can}. Given an NQC klt g-pair $(X,B+M)$ such that $K_X+B+M$ is not pseudo-effective, we consider a pseudo-effective $\R$-divisor $D$ on $X$ and we distinguish two cases. If $D$ is not big, then $\abs(D) = X$ is uniruled by Lemma \ref{lem:uniruled_nonpsef_can}, but the irreducible components of $\dbs(D)$ or $\sbs(D)$ need not be uniruled in general. If $D$ is big, then the same holds for all asymptotic base loci of $D$. The following examples illustrate these phenomena.
	
	\begin{exa}~
		
		\medskip
		
		\noindent (i) Let $C$, $\mu \colon X \to S$, $H$ and $E$ be as in Example \ref{exa:blowup_vertex_of_cone}. Set $B \coloneqq 0$ and $M \coloneqq \mu^* H$. Then $(X,B+M)$ is a klt polarized $\Q$-pair such that $-(K_X+B+M) = E \geq 0$ is neither nef nor big, since $E^2=-1$ and $\kappa(X,E)=0$. In particular, $K_X+B+M$ is not pseudo-effective. By Remark \ref{rem:NZD_properties}(3) we obtain $ \dbs(E) = \sbs(E) = E \cong C $, which is not uniruled, while $ \abs(E) = X $ is uniruled.
		
		\medskip
		
		\noindent (ii) Let $B_0 \subseteq \mathbb{P}^2$ be an elliptic curve and let $\mu \colon X \to \mathbb{P}^2$ be the blow-up of $\mathbb{P}^2$ at $13$ general points $ p_1, \dots, p_{13} \in B_0$. Denote by $E_i$ the exceptional prime divisor on $X$ over $p_i$, where $1 \leq i \leq 13$, and set $E \coloneqq \sum_{i=1}^{13} E_i$. Let $H$ be the pullback of a general line in $\mathbb{P}^2$ and let $B \in |3H-E|$ be the strict transform of $B_0$ on $X$. Consider the smooth pair $(X,0)$ and note that $K_X$ is not pseudo-effective. Set $ D \coloneqq H+B \sim 4H-E $ and observe that $D$ is big but not nef, as $D \cdot B = -1$. We will determine below the base loci associated with $D$.
		
		Since $D \cdot B < 0$, by Remark \ref{rem:dbs_negative_intersection} we infer that $B \subseteq \dbs(D)$, while by Remark \ref{rem:sbs_properties}(1)(3)(4) and since $B \geq 0$, we deduce that $\sbs(D) \subseteq B$. Thus, $ \dbs(D) = \sbs(D) = B $. Finally, to compute $\abs(D)$, we will first determine the Zariski decomposition $P_\sigma(D) + N_\sigma(D)$ of $D$, since then by \cite[Example 1.11]{ELMNP06} we obtain $\abs(D)=\abs\big(P_{\sigma}(D)\big)$.
		To this end, by Remark \ref{rem:NZD_properties}(1) we know that $\Supp N_\sigma(D) = B$, so using that $H \cdot B = 3$, $B^2 = -4$ and $ P_\sigma(D) \cdot N_\sigma(D) = 0 $, we find
		\[ P_\sigma(D) = H + \frac{3}{4}B \quad \text{ and } \quad N_\sigma(D) = \frac{1}{4}B , \]
		and hence Remark \ref{rem:abs_alt_description} yields
		$ \abs(D) = \abs \big( P_\sigma(D) \big) = B $.
		
		In conclusion, 
		\[ \dbs(D) = \sbs(D) = \abs(D) = B \cong B_0 \]
		is not uniruled, whereas $X$ is uniruled.
	\end{exa}
	
	In the remainder of this section we deal with Corollaries \ref{cor:BBP13_CorA_g} and \ref{cor:loci_uniruledness_abundant}.
	
	\begin{proof}[Proof of Corollary \ref{cor:BBP13_CorA_g}]~
		
		\medskip
		
		\noindent (i) If $V$ is an irreducible component of $\dbs(D)$, then by \cite[Lemma 1.14]{ELMNP06} we deduce that $V$ is an irreducible component of $\abs(D+A)$ for some ample $\R$-divisor $A$ on $X$. Therefore, since $D+A$ is big, we obtain (i) by arguing as in the proof of Corollary \ref{cor:loci_comparison_arb_nef_antican}(i) and by applying Theorem \ref{thm:loci_uniruledness_can}(ii) instead of Theorem \ref{thm:loci_comparison_can}(iii).
		
		\medskip
		
		\noindent (ii) We argue as in the proof of Corollary \ref{cor:loci_comparison_arb_nef_antican}(ii), except that we now apply Theorem \ref{thm:loci_uniruledness_can}(ii) instead of Theorem \ref{thm:loci_comparison_can}(iii).
		
		\medskip
		
		\noindent (iii) We conclude by (i) and Corollary \ref{cor:loci_comparison_arb_nef_antican}(iii).
	\end{proof}
	
	\begin{proof}[Proof of Corollary \ref{cor:loci_uniruledness_abundant}]~
		
		\medskip
		
		\noindent (i) We conclude by Corollary \ref{cor:loci_comparison_abundant}(i) and Theorem \ref{thm:loci_uniruledness_can}(i).
		
		\medskip
		
		\noindent (ii) We conclude by Corollary \ref{cor:loci_comparison_abundant}(ii) and Corollary \ref{cor:BBP13_CorA_g}(i).
	\end{proof}
	
	The last two examples in this section indicate that in the setting of Corollary \ref{cor:BBP13_CorA_g}(i) the irreducible components of $\sbs(D)$ need not be uniruled in general; similarly for $\abs(D)$, unless $X$ itself is uniruled.
	
	\begin{exa}~
		\label{exa:uniruledness_fails_II}
		
		\medskip
		
		\noindent (i) \cite[Example 1.1]{Sho00}:
		Let $ X \coloneqq \mathbb{P}(\mathcal{E}) \to C$ be a $\mathbb{P}^1$-bundle over an elliptic curve $C$, where $\mathcal{E}$ is a rank-two vector bundle over $C$ which is defined by an extension
		\[ 0 \to \mathcal{O}_C \to \mathcal{E} \to \mathcal L \to 0 , \] 
		where $ \mathcal L$ a non-torsion line bundle on $C$ of degree zero. Then $-K_X$ is nef. Moreover, we have $\kappa(X,-K_X)=0$ and the unique member in the linear system $|{-}K_X|$ is $D \coloneqq C_1+C_2$, where $C_1$ and $C_2$ are disjoint elliptic curves. It follows that $ \dbs(C_1) = \dbs(C_2) = \emptyset $,
		$\sbs(C_1)=C_1$ and $\sbs(C_2)=C_2$ are not uniruled, whereas $\abs(C_1)=\abs(C_2)=X$ is uniruled.
		
		\medskip
		
		\noindent (ii) Let $E$ be an elliptic curve and pick a non-torsion divisor $Q$ on $E$ of degree $0$. Consider the abelian surface $X \coloneqq E \times E$, denote by $\operatorname{pr}_1 \colon X \to E$ the canonical projection to the first factor and set $D \coloneqq \operatorname{pr}_1^* Q$. Since $\kappa(X,D) = \kappa(E,Q) = - \infty$, we have $ \sbs(D) = \abs(D) = X $, which is not uniruled.
	\end{exa}

	\section{On the uniruledness of the diminished base locus: the lc case}
	\label{section:uniruledness_dbs_lc}
	
	Theorem \ref{thm:loci_uniruledness_can}(i) and the relevant discussion in Section \ref{section:uniruledness_loci} lead naturally to the following question.
	
	\begin{question}
		\label{question:uniruledness_dbs}
		Let $(X,B+M)$ be an lc g-pair such that $K_X+B+M$ is pseudo-effective. Is every irreducible component of $\dbs(K_X+B+M)$ uniruled?
	\end{question}
	
	We can give the following affirmative answer to Question \ref{question:uniruledness_dbs}.
	
	\begin{thm}
		\label{thm:uniruledness_dbs_can_lc}
		If $(X,B+M)$ is an lc g-pair such that $K_X+B+M$ is pseudo-effective and if $(X,0)$ is klt, then every irreducible component of $\dbs(K_X+B+M)$ is uniruled.
	\end{thm}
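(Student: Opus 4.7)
The plan is to reduce the statement to the big-adjoint klt case treated in Theorem~\ref{thm:loci_uniruledness_can}(ii), via a klt perturbation of the given g-pair exploiting the hypothesis that $(X,0)$ is klt. Throughout we tacitly assume that $M_W$ is NQC, so that the perturbed g-pair is NQC klt. Let $V$ be an irreducible component of $\dbs(K_X+B+M)$ and fix any ample $\R$-divisor $A$ on $X$. By \cite[Lemma 1.14]{ELMNP06} together with the monotonicity of $\abs$ under the addition of an ample divisor, there exists $\epsilon_0>0$ such that $V$ is an irreducible component of $\abs(K_X+B+M+\epsilon A)$ for every $0<\epsilon\le\epsilon_0$.

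Next we construct the klt perturbation. Since $B+M+A$ is big (effective plus pseudo-effective plus ample), a Kodaira-type argument produces a general effective $\R$-divisor $F\sim_\R B+M+A$ with arbitrarily small coefficients, with $\Supp F$ in general position relative to $\Supp B$, and such that $(X,F)$ is klt. For $t\in(1-\epsilon_0,1)$ we consider the g-pair
\[ \bigl(X,(tB+(1-t)F)+tM\bigr)\quad\text{with nef part}\quad tM_W. \]
Using the decomposition $K_X+tB+(1-t)F+tM=t(K_X+B+M)+(1-t)(K_X+F)$ and pulling back to a common log resolution yields, for every divisorial valuation $E$ over $X$, the convex-combination identity
\[ a\bigl(E,X,tB+(1-t)F+tM\bigr) \;=\; t\,a(E,X,B+M) + (1-t)\,a(E,X,F), \]
which is strictly greater than $-1$ since $a(E,X,B+M)\ge-1$ and $a(E,X,F)>-1$. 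Hence the perturbed g-pair is NQC klt. On the other hand, a direct computation gives
\[
K_X+tB+(1-t)F+tM \;\sim_\R\; K_X+tB+tM+(1-t)(B+M+A) \;=\; K_X+B+M+(1-t)A,
\]
which is big. Since $\abs$ depends only on the numerical equivalence class and $1-t\le\epsilon_0$, we conclude that $V$ is an irreducible component of $\abs\bigl(K_X+tB+(1-t)F+tM\bigr)$. Applying Theorem~\ref{thm:loci_uniruledness_can}(ii) to this NQC klt g-pair with big adjoint then yields that $V$ is uniruled.

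The main technical point is the Kodaira-type construction of $F$: we need $F\ge 0$ with $F\sim_\R B+M+A$, with coefficients sufficiently small that $(X,F)$ is klt and such that the boundary $tB+(1-t)F$ has all coefficients strictly less than $1$ for $t$ close to $1$, and with $\Supp F$ in sufficiently general position that the discrepancy computation above is valid on a common log resolution of $(X,B+F+M)$. This is delicate but standard, using the bigness of $B+M+A$ together with a routine genericity argument.
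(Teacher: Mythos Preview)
Your strategy matches the paper's: perturb by a small ample divisor, pass to a klt structure with big adjoint, and invoke Theorem~\ref{thm:loci_uniruledness_can}. The paper's execution is cleaner and shorter: rather than building the convex-combination g-pair $(X,tB+(1-t)F+tM)$, it appeals directly to \cite[Lemma~3.4]{HanLi22} (using that $(X,0)$ is klt and $A$ is ample) to produce a boundary $\Delta\sim_\R B+A+M$ with $(X,\Delta)$ a klt \emph{pair}, and then applies Theorem~\ref{thm:loci_uniruledness_can}(i) to $(X,\Delta)$. This route avoids both of the issues below.

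First, by retaining $tM$ as the nef part you are forced to assume that $M_W$ is NQC in order to apply Theorem~\ref{thm:loci_uniruledness_can}, but the statement you are proving carries no such hypothesis. Reducing to a usual pair $(X,\Delta)$, as the paper does, makes the NQC condition vacuous. Second, your claim that a general $F\sim_\R B+M+A$ can be chosen with ``arbitrarily small coefficients'' is false. For instance, let $X$ be the blow-up of $\mathbb{P}^2$ at a point with exceptional curve $E$ and $H$ the pullback of a line, and take $B=E$, $M=0$, $A=\epsilon(2H-E)$ with $0<\epsilon\ll 1$; then $B+M+A=(1-\epsilon)E+2\epsilon H$ has $\sigma_E(B+M+A)=1-\epsilon$, so every effective $F\sim_\R B+M+A$ satisfies $\mult_E F\ge 1-\epsilon$. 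What you actually need is only that \emph{some} $F$ with $(X,F)$ klt exists, and this is precisely the content of \cite[Lemma~3.4]{HanLi22}; your Kodaira-type sketch, as written, does not establish it.
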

	
	\begin{proof}
		Let $V$ be an irreducible component of $\dbs(K_X+B+M)$. Arguing as in the proof of Theorem \ref{thm:loci_comparison_can}(i), we infer that there exists an ample $\R$-divisor $A$ on $X$ such that $V$ is an irreducible component of $\dbs\big( K_X+(B+A)+M\big)$. Since $(X,0)$ is a klt pair, by \cite[Lemma 3.4]{HanLi22} there exists a boundary $\Delta \sim_\R B+A+M$ on $X$ such that $(X,\Delta)$ is a klt pair, so $V$ is an irreducible component of $\dbs(K_X+\Delta)$ by Remark \ref{rem:abs+dbs_properties}(1). Therefore, by applying Theorem \ref{thm:loci_uniruledness_can}(i) to the pair $(X,\Delta)$, we conclude that $V$ is uniruled.
	\end{proof}
	
	As an application of Theorem \ref{thm:uniruledness_dbs_can_lc}, we prove that the irreducible components of the diminished base locus of the $\R$-divisor $D$ from Corollary \ref{cor:loci_comparison_arb_lc_partial} are uniruled.
	
	\begin{cor}
		\label{cor:BBP13_CorA_lc_dbs}
		Let $(X,B+M)$ be an lc g-pair such that $K_X+B+M \equiv 0$. Assume that $(X,0)$ is klt, and let $D \in \Div_{\R}(X)$ with $|D|_\R \neq \emptyset$. If there exists $G \in |D|_\R$ such that 
		\[ \tau \coloneqq \sup \{ t \in [0,1] \mid K_X+ (B+tG) + M \emph{ is lc} \} > 0 , \]
		then every irreducible component of $\dbs(D)$ is uniruled.
	\end{cor}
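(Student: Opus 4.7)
The plan is to combine the construction from Corollary \ref{cor:loci_comparison_arb_lc_partial} with the uniruledness result of Theorem \ref{thm:uniruledness_dbs_can_lc} to reduce the statement to the adjoint case.

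First, I would define the modified g-pair $\big(X, (B+\tau G)+M\big)$, which by the very definition of $\tau$ is an lc g-pair on the same underlying variety $X$ (and hence still satisfies the hypothesis that $(X,0)$ is klt). Using that $K_X+B+M \equiv 0$ and that $G \sim_\R D$, its adjoint divisor satisfies
\[ K_X + (B+\tau G) + M \;\equiv\; \tau D, \]
so in particular it is pseudo-effective (indeed $\tau G \geq 0$ gives $|K_X+(B+\tau G)+M|_\R \neq \emptyset$ when $K_X+B+M \sim_\R 0$; in the merely numerical case, pseudo-effectivity of $\tau D$ still follows from $|D|_\R \neq \emptyset$).

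Next, I would apply Theorem \ref{thm:uniruledness_dbs_can_lc} to the lc g-pair $\big(X,(B+\tau G)+M\big)$ to conclude that every irreducible component of $\dbs\big(K_X+(B+\tau G)+M\big)$ is uniruled. Finally, invoking Remark \ref{rem:abs+dbs_properties}(1) (numerical invariance of $\dbs$) together with Remark \ref{rem:abs+dbs_properties}(2) (positive rescaling invariance of $\dbs$, which applies since $\tau > 0$), we obtain
\[ \dbs\big(K_X+(B+\tau G)+M\big) \;=\; \dbs(\tau D) \;=\; \dbs(D), \]
which gives the desired uniruledness statement.

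No step presents a genuine obstacle here: the argument is essentially the same reduction as in the proof of Corollary \ref{cor:loci_comparison_arb_lc_partial}, except that the final input is now Theorem \ref{thm:uniruledness_dbs_can_lc} (uniruledness of components of $\dbs$ in the lc case, under the klt assumption on $(X,0)$) rather than Theorem \ref{thm:loci_comparison_can}. The only mild subtlety is to verify that the klt condition on $(X,0)$ is preserved when one replaces $B$ by $B+\tau G$, which is automatic since this hypothesis is about the variety $X$ alone.
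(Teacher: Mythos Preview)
Your proposal is correct and follows essentially the same approach as the paper's own proof: form the lc g-pair $\big(X,(B+\tau G)+M\big)$, apply Theorem \ref{thm:uniruledness_dbs_can_lc}, and use Remark \ref{rem:abs+dbs_properties}(1)(2) to identify $\dbs\big(K_X+(B+\tau G)+M\big)$ with $\dbs(D)$. Your write-up even spells out the verification that $(X,0)$ remains klt and that the adjoint divisor is pseudo-effective, which the paper leaves implicit.
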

	
	\begin{proof}
		The statement follows immediately from Theorem \ref{thm:uniruledness_dbs_can_lc}, applied to the lc g-pair $\big( X, (B+\tau G) + M \big)$, taking Remark \ref{rem:abs+dbs_properties}(1)(2) into account.
	\end{proof}
	
	The next example shows that the assumption in Corollary \ref{cor:BBP13_CorA_lc_dbs} that the lc threshold $\tau \coloneqq \operatorname{lct} (X, B+M;G)$ is positive for some $ G \in |D|_\R $ cannot be removed, demonstrating thus that our previous result is sharp.
	
	\begin{exa}
		Let $\mu \colon X \to S$, $E$ and $H$ be as in Example \ref{exa:blowup_vertex_of_cone}. Set $B \coloneqq E$ and $M \coloneqq \mu^*H$. Then $(X,B+M)$ is a log smooth lc polarized $\Q$-pair (which is not klt) such that $K_X+B+M \sim_\R 0 $. According to \cite[Example 6.4]{Tak08}, $-K_X$ is big but not nef, and 
		\[ \dbs({-}K_X) = \sbs({-}K_X) = \abs({-}K_X) = E \]
		is not uniruled, whereas $X$ is uniruled. Observe also that the g-pair $\big( X, (B+tG) + M \big)$ is not lc for any $G \in |{-}K_X|_\Q$, since $\sbs({-}K_X) = E$.
	\end{exa}

	\bibliographystyle{amsalpha}
	\bibliography{BibliographyForPapers}
	
\end{document}